\newcommand{\Cc}{\mathbb{C}} 
\newcommand{\Pp}{\mathbb{P}}
\newcommand{\Nn}{\mathbb{N}}
\newcommand{\Zz}{\mathbb{Z}}
\newcommand{\Qq}{\mathbb{Q}} 
\newcommand{\Ff}{\mathbb{F}}
\theoremstyle{plain}
\newtheorem{theorem}{Theorem}[section]    
\newtheorem{twisting lemma}[theorem]{Twisting lemma}
\newtheorem{lemma}[theorem]{Lemma}       
\newtheorem{corollary}[theorem]{Corollary}   
\theoremstyle{remark}
\font\sevenrm=cmr10 scaled 700
\def\sep{{\scriptsize\hbox{\rm sep}}}
\def\spe{{0}}
\def\ur{{\scriptsize\hbox{\rm ur}}}
\def\tr{{\hbox{\sevenrm tr}}}
\def\Gabs{\hbox{\rm G}}
\def\Gal{\hbox{\rm Gal}}
\def\cm{\hbox{\hbox{\rm C}\kern-5pt{\raise 1pt\hbox{$|$}}}}
\def\lhfl#1#2{\smash{\mathop{\hbox to 12mm{\leftarrowfill}}
\limits^{#1}_{#2}}}
\def\rhfl#1#2{\smash{\mathop{\hbox to 12mm{\rightarrowfill}}
\limits^{#1}_{#2}}}
\def\build#1_#2^#3{\mathrel{
\mathop{\kern 0pt#1}\limits_{#2}^{#3}}}
\def\htrait#1#2{\smash{\mathop{\hbox to 12mm{\hrulefill}}
\limits^{#1}_{#2}}}
\def\sxbullet{{\raise 2pt\hbox{\bf .}}}
\begin{document}

\title[Specialization results in Galois theory]{Specialization results in Galois theory}

\author{Pierre D\`ebes}

\author{Fran\c cois Legrand}

\email{Pierre.Debes@math.univ-lille1.fr}

\email{Francois.Legrand@math.univ-lille1.fr}

\address{Laboratoire Paul Painlev\'e, Math\'ematiques, Universit\'e Lille 1, 59655 Villeneuve d'Ascq Cedex, France}

\subjclass[2000]{Primary 11R58, 12E30, 12E25, 14G05, 14H30; Secondary 12Fxx, 14Gxx, 14H10}

\keywords{Specialization, algebraic covers, twisting lemma, Hilbert's ir\-re\-du\-ci\-bility theorem, 
Grunwald's problem, PAC fields, local fields, global fields, Hurwitz spaces}

\date{\today}

\begin{abstract} 
The paper has three main applications. The first one is this Hilbert-Grunwald statement.
If $f:X\rightarrow \Pp^1$ is a degree $n$ $\Qq$-cover with monodromy group $S_n$ 
over $\overline \Qq$, and finitely many suitably big primes $p$ are given with 
partitions $\{d_{p,1}, \ldots, d_{p,s_p}\}$ of $n$, there exist infinitely many specializations of $f$ at points 
$t_0\in \Qq$ that are degree $n$ field extensions with residue 
degrees $d_{p,1}, \ldots, d_{p,s_p}$ at each prescribed prime $p$.
%
The second one provides a description of 
the se\-pa\-ra\-ble closure of a PAC field $k$ of characteristic $p\not=2$: it is generated by all elements 
$y$ such that $y^m-y\in k$ for some $m\geq 2$.
The third one involves Hurwitz moduli spaces and concerns fields of definition of covers.
A common tool is a criterion for
an \'etale algebra $\prod_lE_l/k$ over a field $k$ to be the specialization 
of a $k$-cover $f:X\rightarrow B$ at some point $t_0\in B(k)$. The question is reduced 
to finding $k$-rational points on a certain $k$-variety, and then studied over the 
various fields $k$ of our applications. 

\end{abstract}

\maketitle



\section{Introduction}\label{sec:introduction}

The paper has three main applications which all are specialization results in Galois theory.

A first one is the following version of Hilbert's irreducibility theorem where a Grunwald like conclusion is conjoined with the usual irreducibility conclusion. 
\vskip 2mm

\noindent
{\bf Theorem 1} (corollary  \ref{cor:effective})
{\it Let $f:X\rightarrow \Pp^1$ be a degree $n$ $\Qq$-cover with geometric monodromy group\footnote{that is: the Galois group of the Galois closure of $f$ over $\overline{\Qq}$.} $S_n$  and $S$ be a finite set of primes $p$, suitably large (depending on $f$), each given with some positive integers $d_{p,1},\ldots,d_{p,s_p}$ of sum $n$. Then $f$ has infinitely many specializations that are degree $n$ field extensions of $\Qq$ with residue degrees $d_{p,1},\ldots,d_{p,s_p}$ at each
 $p\in S$.}
 \vskip 2mm

A second one is concerned with the arithmetic of PAC fields. Recall that a field $k$ is said to be PAC if every non-empty geometrically irreducible $k$-variety has a Zariski-dense set of $k$-rational points.
A typical example is the field $\Qq^\tr(\sqrt{-1})$ (which is also hilbertian and whose absolute Galois group is a free profinite group of countable rank); here $\Qq^\tr$ is the field of  totally real numbers (algebraic numbers such that all conjugates are real). See \cite{FrJa} for more  on PAC fields.

\vskip 2mm

\noindent
{\bf Theorem 2} (corollary \ref{cor:trinomial_realization})
{\it If $k$ is a PAC field of characteristic $p$, every extension $E/k$ of degree $n$ with $p\not| \hskip 2pt n(n-1)$ can be realized by a trinomial $Y^n - Y + b\in k[Y]$. Furthermore, if $p\not=2$, the separable closure $k^\sep$ is generated by all elements $y\in k^\sep$ such that $y^n-y\in k$ for some $n\geq 2$.  
 }
\vskip 2mm



We have similar applications about realizations by Morse polynomials (corollary \ref{cor:morse}), 
and
over finite fields (\S \ref{ssec:trinomials-variants}).



A third application concerns Hurwitz moduli spaces of covers of $\Pp^1$ with fixed branch point number $r$ and fixed monodromy group. Let $\hbox{\sf{H}}$ be a geometrically irreducible component of some Hurwitz space defined over some field $k$  and $N$ be the degree of the field of definition of the generic cover in $\hbox{\sf{H}}$ over that of its branch point divisor;
more formally $N$ is the degree of the natural cover $\hbox{\sf{H}} \rightarrow \hbox{\sf{U}}_r$ with $\hbox{\sf{U}}_r$ the configuration space for finite subsets of $\Pp^1$ of cardinality $r$ (see \S \ref{ssec:hurwitz_spaces}). We also make this assumption which can be checked in practice: the Hurwitz braid action restricted to $\hbox{\sf{H}}$ generates all of $S_N$ (more formally $S_N$ is the geometric monodromy group of the cover $\hbox{\sf{H}} \rightarrow \hbox{\sf{U}}_r$).  



\vskip 2mm

\noindent
{\bf Theorem 3} (corollary  \ref{cor:hurwitz_spaces})
{\it 
Consider the subset ${\mathcal U}\subset \hbox{\sf{U}}_r(k)$ of all ${\bf t}_0$ such that the $\overline k$-covers $f:X\rightarrow \Pp^1$ 
in $\hbox{\sf{H}}$ with branch divisor 
${\bf t}_0$ satisfy the following condition 
{\rm (in each case)}:
\vskip 1mm

\noindent
{\rm (a)} {\it {\rm(case $k$ PAC of characteristic $0$)}: their smallest fields of definition are given 
finite extensions $E_l/k$ ($l=1,\ldots,s$) with $\sum_{l=1}^s [E_l:k] = N$,

\vskip 1mm

\noindent
{\rm (b)} {\rm (case $k$ a number field)}: 

\noindent
- their fields of moduli are degree $N$ extensions of $k$, and

\noindent
- for each $v$ in a given finite set of finite places of $k$ with suitably big re\-si\-due field and residue characteristic (depending on $\hbox{\sf{H}}$), 
and each associated partition $\{d_{v,1}\ldots, d_{v,s_v}\}$ of $N$, the smallest definition fields of the covers $f\otimes_{\overline k}\overline{k_v}$ are the unramifed extensions of $k_v$ of degree $d_{v,1}, \ldots, d_{v,s_v}$. 
%

\vskip 1,6mm

\noindent  
Then {\rm (in each case)} ${\mathcal U}$ is a Zariski-dense subset of $\hbox{\sf{U}}_r(k)$. 
  }}

 \vskip 2mm

\noindent

\noindent
We refer to \S \ref{sec:applications} for more detailed statements and further applications.
\vskip 1mm

A common tool for these applications is a general specialization result.
If $f:X\rightarrow B$ is an algebraic cover defined over a field $k$ and 
$t_0$ a $k$-rational point on $B$, not in the finite list of branch points of $f$, the specialization of $f$ at $t_0$ is defined as a finite $k$-\'etale algebra of degree $n=\deg(f)$. For example, if $B=\Pp^1$ and $f$ is given by some
polynomial $P(T,Y)\in k[T,Y]$, it is the product of separable extensions of $k$ that correspond to the 
irreducible factors of $P(t_0,Y)$ (for all but finitely many $t_0\in k$). The central question is whether
a given $k$-\'etale algebra of degree $n$ is the specialization at some unramified point $t_0\in B(k)$ of some given degree $n$ $k$-cover $f:X\rightarrow B$. The classical Hilbert specialization property
corresponds to the special case \'etale algebras are taken to be single field extensions of 
degree $n$ and the answer is positive for at least one of them.

The question was investigated in \cite{DeBB} and \cite{DEGha} for Galois covers and some answers given that relate to the Regular Inverse Galois Problem (RIGP) and the Grunwald problem. 
We consider here the situation of not necessarily Galois covers. Our main tool 
is  a {\it twisting lemma} which extends the twisting lemma from the previous papers and 
gives a general answer to our question, under some hypothesis. The answer is that there exists
a certain ``twisted'' cover $\widetilde g: \widetilde Z \rightarrow B$ such that the \'etale algebra
{\it is} a specialization of the cover at some point $t_0\in B(k)$ if there exist  unramified $k$-rational points 
on $\widetilde Z$ (lemma \ref{prop:twisted cover}). The hypothesis is that the geometric monodromy group of the cover $f$ is the symetric group $S_n$ where $n=\deg(f)$; it is satisfied in many practical situations.

In \S \ref{sec:varying_field} we investigate the remaining problem of finding rational points
on $\widetilde Z$ over various fields: PAC fields, finite fields, complete fields, 
number fields. Corollaries \ref{cor:PAC} - \ref{cor:loc-glob} 
are answers to the original question in these situations. 
Applications are finally proved in \S \ref{sec:applications}. 



\section{The twisting lemma} \label{sec:local_result}

\subsection{Basic notation}  \label{ssec:basic_notation}  
Given a field $k$, fix an algebraic closure $\overline k$ and denote the separable closure of $k$ in $\overline k$ by $k^\sep$ and its absolute Galois group by $\Gabs_k$. If $k^\prime$ is an overfield of $k$, we use the notation $\otimes_kk^\prime$ for the scalar extension from $k$ to $k^\prime$: for example, if $X$ is a $k$-curve,
$X\otimes_kk^\prime$ is the $k^\prime$-curve obtained by scalar extension. For more on this subsection, we refer to \cite[\S 2]{DeDo1} or \cite[chapitre 3]{coursM2}.

\subsubsection{Etale algebras and their Galois representations} \label{sssec:etale_algrebras}
Given a field $k$, a {\it $k$-\'etale algebra} is a product $\prod_{l=1}^s E_l/k$ of $k$-isomorphism classes of finite sub-field extensions $E_1/k, \ldots, E_s/k$ of $k^\sep/k$. Set $m_l=[E_l:k]$, $l=1,\ldots,s$ and $m=\sum_{l=1}^s m_l$. If $N/k$ is a Galois extension containing the Galois closures of $E_1/k, \ldots, E_s/k$, the Galois group $\Gal(N/k)$ acts by left multiplication on the left cosets of $\Gal(N/k)$ modulo $\Gal(N/E_l)$ for each $l=1,\ldots,s$. The resulting action $\Gal(N/k) \rightarrow S_m$ on all these left cosets, which is well-defined up to conjugation by elements of $S_m$, is called the {\it Galois representation of $\prod_{l=1}^s E_l/k$ relative to $N$}. Equivalently it can be defined as the 
action of $\Gal(N/k)$ on the set of all $k$-embeddings $E_l \hookrightarrow N$, $l=1,\ldots,s$.  

Conversely, an action $\mu: \Gal(N/k) \rightarrow S_m$ determines a $k$-\'etale algebra in the following way. For $i=1,\ldots,m$, denote the fixed field in $N$ of the subgroup of $\Gal(N/k)$ consisting of all $\tau$ such that $\mu(\tau)(i) = i$ by $E_i$. 
The product $\prod_lE_l/k$ for $l$ ranging over a set of representatives of the orbits 
of the action $\mu$ and where each extension $E_l/k$ is regarded modulo $k$-isomorphism is a $k$-\'etale algebra with $\sum_l [E_l:k] = m$. 

\vskip 1,5mm

\noindent
{\it {\rm G}-Galois variant}: if $\prod_{l=1}^s E_l/k$ is a {\it single Galois extension} $E/k$, the restriction
$\Gal(N/k)\rightarrow \Gal(E/k)$ is called {\it the {\rm G}-Galois representation of $E/k$} (relative to $N$). Any map $\varphi: \Gal(N/k) \rightarrow G$ obtained by composing $\Gal(N/k)\rightarrow \Gal(E/k)$ with a monomorphism $\Gal(E/k) \rightarrow G$ is called a G-{Galois} representation of $E/k$ 
(relative to $N$).
The extension $E/k$ can be recovered 
from $\varphi: \Gal(N/k) \rightarrow G$ by taking the fixed field in $N$ of ${\rm ker}(\varphi)$.  One obtains the Galois representation $\Gal(N/k)\rightarrow S_n$ of $E/k$ (relative to $N$) from a G-Galois representation $\varphi: \Gal(N/k) \rightarrow G$ (relative to $N$) by composing it
with the left-regular representation of the image group $\varphi(\Gal(N/k))$.

\subsubsection{Covers and function field extensions} Given a regular projective geometrically irreducible $k$-variety $B$, a {\it $k$-mere cover of $B$} is a finite and generically unramified morphism $f:X \rightarrow B$ defined over $k$ with $X$ a normal and
geometrically irreducible variety. 
Through the function field functor $k$-mere covers $f:X \rightarrow B$ correspond to finite separable field extensions $k(X)/k(B)$ that are regular over $k$ ({\it i.e. } $k(X)\cap \overline k=k$). 
The Galois group of the Galois closure $\widehat{k(X)}/k(B)$ of  $k(X)/k(B)$ is called the {\it monodromy group} of $f$ and the monodromy group of the $k^\sep$-mere cover $f\otimes_k k^\sep$ the {\it geometric monodromy group}. The Galois closure $\widehat{k(X)}/k(B)$ need not be a regular extension of $k$; it does if and only if the monodromy group and the geometric monodromy group coincide. This happens for example if $\widehat{k(X)}=k(X)$ ({\it i.e.} if $f$ is Galois), or if $f$ is of degree $n$ and geometric monodromy group $S_n$. When the Galois closure $\widehat{k(X)}/k(B)$ is regular over $k$, it  corresponds to a Galois $k$-mere cover $g:Z\rightarrow B$ called the Galois closure of $f$.

The term ``mere'' used above is meant to distinguish mere covers from G-covers. By {\it$k$-{\rm G}-cover of $B$ of group $G$}, we mean a Galois cover $f:X\rightarrow B$ over $k$ given together with an
isomorphism $G\rightarrow \Gal(k(X)/k(B))$. Viewed as a mere cover ({\it i.e.} without the isomorphism $G\rightarrow \Gal(k(X)/k(B))$), $f:X\rightarrow B$ is a Galois $k$-mere cover. G-covers of $B$ of group $G$ over $k$ correspond to regular Galois extensions $k(X)/k(B)$ given with 
an isomorphism of the Galois group $\Gal(k(X)/k(B))$ with $G$.  

We sometimes abuse terminology and call G-covers (resp. mere covers) {\it regular Galois covers} (resp. {\it regular covers}).


By {\it branch divisor} of a $k$-cover $f$ (mere or G-), we mean that of the $k^\sep$-cover  $f\otimes_kk^\sep$, {\it i.e.} the formal sum of all hypersurfaces of $B$ such that the associated discrete valuations are ramified in the extension $k^\sep(X)/k^\sep(B)$. 


\subsubsection{$\pi_1$-representations}
Given a reduced positive divisor $D\subset B$, denote the {\it $k$-fundamental group} of $B\setminus D$ by $\pi_1(B\setminus D, t)_k$ where $t\in B(\overline k)\setminus D$ is a base point. Conjoining the two dictionaries covers-function field extensions and field extensions-Galois representations, we obtain the following correspondences.

Mere covers of $B$ of degree $n$ (resp. G-covers of $B$ of group $G$) with branch divisor contained in $D$ correspond to transitive morphisms $\pi_1(B\setminus D, t)_k \rightarrow S_n$ such that the restriction to 
$\pi_1(B\setminus D, t)_{k^\sep}$ is transitive (resp. to epimorphisms $\pi_1(B\setminus D, t)_k \rightarrow G$ such that the restriction to $\pi_1(B\setminus D, t)_{k^\sep}$ is onto). 
These morphisms are called {\it fundamental group representations} ($\pi_1$-representations for short)
of the corresponding $k$-covers (mere or G-).

\subsubsection{Specializations} \label{ssec:specialization}
Each $k$-rational point $t_0\in B(k)\setminus D$ provides a section
${\rm s}_{t_0}: \Gabs_k\rightarrow \pi_1(B\setminus D, t)_k$ to the exact sequence

$$ 1\rightarrow \pi_1(B\setminus D, t)_{k^\sep} \rightarrow \pi_1(B\setminus D, t)_k \rightarrow \Gabs_k \rightarrow 1$$ 

\noindent
well-defined up to conjugation by elements in $\pi_1(B\setminus D, t)_{k^\sep}$. 

If $\phi: \pi_1(B\setminus D, t)_k \rightarrow G$ represents a $k$-G-cover $f:X\rightarrow B$, 
the morphism $\phi \circ {\sf s}_{t_0}:\Gabs_k \rightarrow G$ is a G-Galois representation. The fixed field in $k^\sep$ of ${\rm ker}(\phi \circ {\sf s}_{t_0})$ is the residue field at some/any point above $t_0$
in the extension $k(X)/k(B)$. We denote it by $k(X)_{t_0}$ and call $k(X)_{t_0}/k$ {\it the specialization} of the $k$-G-cover $f$ at $t_0$.

If $\phi: \pi_1(B\setminus D, t)_k \rightarrow S_n$ represents a $k$-mere cover $f:X\rightarrow B$, the morphism $\phi \circ {\sf s}_{t_0}:\Gabs_k \rightarrow S_n$ is the {\it specialization representation} 
of $f$ at $t_0$. The corresponding $k$-\'etale algebra is denoted by 
$\prod_{l=1}^s k(X)_{t_0,l}/k$ and called the {\it collection of specializations} of $f$ at $t_0$. Each field $k(X)_{t_0,l}$ is a residue extension at some prime above $t_0$ in the extension $k(X)/k(B)$ and {\it vice-versa}; $k(X)_{t_0,l}$ is called {\it a specialization} of $f$ at $t_0$. Geometrically the fields $k(X)_{t_0,l}$ correspond to the definition fields of the points in the fiber $f^{-1}(t_0)$ and $\phi \circ {\sf s}_{t_0}:\Gabs_k \rightarrow S_n$ to the {\it action} of $\Gabs_k$ on these points. The {\it compositum} in $k^\sep$ of the Galois closures of all spe\-cia\-li\-zations at $t_0$ is {\it the} specialization at $t_0$ of the Galois closure of $f$.



\subsection{The twisting lemma} \label{sec:twisting}

Let $k$ be a field, $f:X\rightarrow B$ be a $k$-mere cover 
and $\prod_{l=1}^s E_l/k$ be a $k$-\'etale algebra. 
The question we address is whether $\prod_{l=1}^s E_l/k$ {is the collection $\prod_{l} k(X)_{t_0,l}/k$ of specializations of $f:X\rightarrow B$} at some unramified point $t_0\in B(k)$. Lemma \ref{prop:twisted cover} gives a sufficient condition for the answer to be affirmative.

\subsubsection{Statement of the twisting lemma} \label{ssec:twisting-statement} 
We assume that $f:X\rightarrow B$ is of degree $n$ and geometric monodromy group $S_n$.
Denote the Galois closure of the cover $f$ by $g: Z \rightarrow B$, the {\it compositum} inside $k^\sep$ of the Galois closures of the extensions $E_l/k$, $l=1,\ldots, s$ by $N/k$. The {\it twisted cover} $\widetilde {g}^{N}: \widetilde {Z}^{N} \rightarrow B$ in the statement below is a $k$-mere cover obtained by twisting the Galois $k$-mere cover $g: Z \rightarrow B$ by the Galois extension $N/k$. Its precise definition is given in \cite{DEGha} and is recalled in  \S \ref{ssec:twisting-lemma-proof}; it is in particular a $k$-model of $g\otimes_kk^\sep$.

\begin{twisting lemma} \label{prop:twisted cover} 
Assume $f:X\rightarrow B$ is a degree $n$ $k$-mere cover with geometric monodromy group $S_n$ and $\prod_{l=1}^s E_l/k$ is a $k$-\'etale algebra with $\sum_{l=1}^s [E_l:k] = n$. Then the twisted cover $\widetilde {g}^{N}: \widetilde {Z}^{N} \rightarrow B$ has the following property. For each unramified point $t_0 \in B(k)$, \vskip 1mm

\noindent
\hskip 5,5mm {\it if} \hskip 1mm {\rm (i)}  there exists a point $x_0\in \widetilde Z^{N}(k)$ such that $\widetilde g^{N}(x_0)=t_0$, 
\vskip 1mm

\noindent
{\it then} \hskip 0,5mm {\rm (ii)} $\prod_{l} E_l/k$ is the collection $\prod_{l} k(X)_{t_0,l}/k$ of specializations of 
$f$ 

\hskip 11mm at the point $t_0$.
\end{twisting lemma}

For $B=\Pp^1$, a polynomial form of the statement can be given for which the $k$-mere cover is replaced by a polynomial $P(T,Y) \in k[T,Y]$ of degree $n$ and with Galois group $S_n$ over $\overline k$, as a polynomial in $Y$. For all but finitely many $t_0\in k$, implication {\rm (i)}  $\Rightarrow$ {\rm (ii)} holds  with condition {\rm (ii)} translated as follows:
\smallskip

\noindent
{\rm (ii)}  {\it the polynomial $P(t_0,Y)$ factors as a product  $\prod_{l=1}^s Q_l(Y)$ of polynomials $Q_l$ irreducible in $k[Y]$ and such that $E_l/k$ is generated by one of its roots, $l=1,\ldots,s$.}
\vskip 1,5mm

With some adjustments, some converse (ii) $\Rightarrow$ (i) also holds in the twisting lemma \ref{prop:twisted cover}. It is also possible to relax the assumption that the geometric monodromy group of $f:X\rightarrow B$ is $S_n$, at the cost of some technical complications. This is explained in \cite{DeLe2}, together with some specific applications.

\subsubsection{Proof of the twisting lemma} \label{ssec:twisting-lemma-proof} 
Let $H=\Gal(N/k)$, $\varphi: \Gabs_k \rightarrow H$ be the G-Galois representation 
of $N/k$ relative to $k^\sep$
 and $\mu: H\rightarrow S_n$ be the Galois representation of 
$\prod_{l=1}^s E_l/k$ relative to $N$. The map $\mu\circ \varphi: \Gabs_k \rightarrow
S_n$ is then the Galois representation of $\prod_{l=1}^s E_l/k$ relative to $k^\sep$.
Denote the $s$ orbits of $\mu: H\rightarrow S_n$, which are the same as the orbits
of $\mu\circ \varphi: \Gabs_k \rightarrow S_n$, by ${\mathcal O}_1,\ldots, {\mathcal O}_s$;
they correspond to the extensions $E_1,\ldots E_s$. Fix one of these orbits, {\it i.e.} 
$l\in \{1,\ldots,s\}$, and let $i\in \{1,\ldots, n\}$ be 
some index such that $E_l$ is the fixed field in $k^{\sep}$ of the subgroup 
of $\Gabs_k$ fixing $i$ {\it via} the action $\mu\circ \varphi$. 

As the $k$-mere cover $f:X\rightarrow B$ is of degree $n$ and that the Galois group 
$\Gal(k^\sep(Z)/k^\sep(B))$ is assumed to be isomorphic to $S_n$, the same is true of  
$\Gal(k(Z)/k(B))$. Therefore $k(Z)$ is a regular extension of $k$, or, in other words, 
$g:Z\rightarrow B$ is a $k$-G-cover. Let $\phi:  \pi_1(B\setminus D, t)_k \rightarrow S_n$ 
be the corresponding $\pi_1$-representation (where $D$ is the branch divisor of $f$).

With ${\rm Per}(S_n)$ the permutation group of $S_n$, consider the map 

$$\widetilde \phi^{\mu\varphi}: \pi_1(B\setminus D, t)_k \rightarrow {\rm Per}(S_n)$$

\noindent
defined by this formula, where 
$r$ is the restriction $\pi_1(B\setminus D, t)_k \rightarrow \Gabs_k$:
for  $\theta\in \pi_1(B\setminus D, t)_k$ and $x\in S_n$,

$$ \widetilde \phi^{\mu\varphi}(\theta)(x)  =
\phi(\theta) \hskip 3pt x  \hskip 4pt (\mu \circ \varphi \circ r) (\theta)^{-1}$$

\noindent
It is easily checked that $\widetilde \phi^{\mu\varphi}$ is a group homomorphism,
with the same restriction on $\pi_1(B\setminus D, t)_{k^\sep}$ as $\phi$ (composed with the left-regular representation of $S_n$). Hence the corresponding action is transitive. Denote the corresponding $k$-mere cover by $\widetilde g^{N}: \widetilde Z^{N} \rightarrow B$ 
and call it  the {\it twisted cover} of $g$ by the extension $N/k$; it is a $k$-model of the $k^\sep$-mere cover $g\otimes_k{k^\sep}$.
The twisted cover $\widetilde g^{N}: \widetilde Z^{N} \rightarrow B$ was defined in \cite{DEGha} (and originally in \cite{DeBB}) where is also given its main property that we are using below.

Let $t_0\in B(k)\setminus D$ and assume that condition (i) from lemma \ref{prop:twisted cover} holds, {\it i.e.}, there exists $x_0\in \widetilde Z^{N} (k)$ such that $\widetilde g^{N} (x_0)=t_0$. Then from \cite[lemma 2.1]{DEGha}, there exists $\omega \in S_n$ such that 
\vskip 3mm

\centerline{$\phi ({\sf s}_{t_0}(\tau)) = \omega \hskip 3pt (\mu \circ \varphi) (\tau) \hskip 3pt \omega^{-1}\hskip 4mm (\tau \in \Gabs_k)$}

\vskip 2mm
%

%


\noindent
where ${\rm s}_{t_0}: \Gabs_k\rightarrow \pi_1(B\setminus D, t)_k$ is the section associated with $t_0$ (\S \ref{ssec:specialization}). It follows that for $j=\omega(i)$, we have, for every $\tau \in \Gabs_k$,

$$\phi ({\sf s}_{t_0}(\tau)) (j) = \omega \hskip 2pt (\mu \circ \varphi)(\tau) \hskip 2pt (i)$$

\noindent
and so $j$ is fixed by $\phi ({\sf s}_{t_0}(\tau))$ if and only if $i$ is fixed by $(\mu \circ \varphi)(\tau)$. Conclude that the specialization $k(X)_{t_0,j}$ and the field $E_l$ coincide.  $\square$


\penalty -3000
\section{Varying the base field} \label{sec:varying_field}

We consider the general problem over various base fields $k$. We start with the 
case of PAC fields (\S \ref {ssec:field_PAC}), which after a first result in \cite{DeBB}, has also been studied in parallel by Bary-Soroker; see \cite{Bary-Soroker_irreducible}, 
\cite[corollary 1.4]{Bary-Soroker_Dirichlet}. \S \ref{ssec:finite_fields} is devoted to finite fields 
for which various forms of the results also exist in the literature. These two cases are presented 
here as 
special cases of our unifying approach.
\S \ref{ssec:local} and \S \ref{ssec:field_loc-glob} give newer applications, to the cases $k$ is a complete field and $k$ is a number field.

\subsection{PAC fields} \label{ssec:field_PAC}
If $k$ is a PAC field, then condition (i) from lemma \ref{prop:twisted cover} holds for all $t_0$ in a Zariski dense subset of $B(k)\setminus D$; consequently so does condition (ii). 

\begin{corollary} \label{cor:PAC}
Let $k$ be a PAC field and $f:X\rightarrow B$ be a $k$-mere cover of degree $n$ and geometric monodromy group $S_n$. If $\prod_{l=1}^s E_l/k$ is a $k$-\'etale algebra with $\sum_{l=1}^s [E_l:k] = n$, then for all $t_0$ in a Zariski dense subset of $B(k)\setminus D$, the collection $\prod_{l} k(X)_{t_0,l}/k$ of specializations of $f$ at $t_0$ is $\prod_{l} E_l/k$.
\end{corollary}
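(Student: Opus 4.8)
The plan is to deduce this corollary directly from the twisting lemma \ref{prop:twisted cover} together with the defining property of PAC fields. First I would invoke the twisting lemma to produce the twisted cover $\widetilde g^{N}: \widetilde Z^{N} \to B$ associated with $f$ and the given $k$-\'etale algebra $\prod_{l=1}^s E_l/k$; here $N/k$ is the compositum of the Galois closures of the $E_l/k$, and the hypothesis $\sum_l [E_l:k] = n$ together with the assumption that $f$ has geometric monodromy group $S_n$ is exactly what the lemma requires. The lemma tells us that for every unramified $t_0 \in B(k)$, the existence of a $k$-rational point $x_0 \in \widetilde Z^{N}(k)$ above $t_0$ forces the collection of specializations $\prod_l k(X)_{t_0,l}/k$ to equal $\prod_l E_l/k$. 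So it suffices to show that the set of $t_0 \in B(k) \setminus D$ lying under a $k$-point of $\widetilde Z^{N}$ is Zariski-dense in $B(k) \setminus D$.

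The key point is that $\widetilde Z^{N}$ is a $k$-model of $g \otimes_k k^\sep$, and $g: Z \to B$ is the Galois closure of $f$, whose geometric monodromy group is $S_n$; in particular $Z$ — hence $\widetilde Z^{N}$ — is a geometrically irreducible $k$-variety. Since $k$ is PAC, $\widetilde Z^{N}(k)$ is Zariski-dense in $\widetilde Z^{N}$. Applying the dominant morphism $\widetilde g^{N}: \widetilde Z^{N} \to B$, the image $\widetilde g^{N}(\widetilde Z^{N}(k))$ is Zariski-dense in $B$, hence in $B(k) \setminus D$ after removing the proper closed subset $D$ (and the locus where $\widetilde g^{N}$ is ramified, both of which are nowhere dense). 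Thus the set $\mathcal{T}$ of unramified $t_0 \in B(k)$ satisfying condition (i) is Zariski-dense in $B(k) \setminus D$. For each such $t_0$, condition (ii) of the twisting lemma gives $\prod_l k(X)_{t_0,l}/k = \prod_l E_l/k$, which is the assertion.

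I expect the only genuine subtlety to be the bookkeeping around geometric irreducibility and the Zariski density transfer: one must be careful that $\widetilde Z^{N}$ is geometrically irreducible (this follows because it is a $k$-model of $g \otimes_k k^\sep$ and the base change of an irreducible $k^\sep$-variety stays irreducible, the twisting only changing the $k$-structure, not the geometry) and that ``Zariski-dense in $B(k) \setminus D$'' is the right notion — equivalently, that $\widetilde g^{N}(\widetilde Z^{N}(k))$ meets every nonempty $k$-open subset of $B$. The rest is essentially formal: the PAC hypothesis does all the work of producing the rational points, and the twisting lemma converts those points into the desired statement about specializations. In the polynomial reformulation over $B = \Pp^1$ one would additionally note that the finitely many bad $t_0$ (where the polynomial form of the twisting lemma fails) do not affect Zariski density.
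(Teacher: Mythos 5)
Your argument is correct and follows exactly the paper's (one-line) proof: invoke the twisting lemma, use the PAC property to get Zariski-dense $k$-points on the geometrically irreducible twisted cover $\widetilde Z^{N}$, and push them down via $\widetilde g^{N}$ to a Zariski-dense set of unramified $t_0$ where condition~(i), hence~(ii), holds. The extra bookkeeping you supply (geometric irreducibility of $\widetilde Z^{N}$ as a $k$-model of $g\otimes_k k^\sep$, discarding the branch locus) is precisely what the paper leaves implicit.
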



A similar result is \cite[theorem 2.4]{Bary-Soroker_irreducible}.
As a special case we obtain this statement, which is also  \cite[corollary 1.4]{Bary-Soroker_Dirichlet}: if $P(T,Y) \in k[T,Y]$ is a polynomial of degree $n$ and Galois group $S_n$ over $\overline k$, as a polynomial in $Y$ and $E/k$ is a degree $n$ separable extension, then there exist infinitely many $t_0 \in k$ such that $P(t_0,Y)$ is irreducible in $k[Y]$ and has a root in $\overline k$ that generates $E/k$.

\subsection{Finite fields} \label{ssec:finite_fields}
Assume $k=\Ff_q$ is the finite field of order $q$ and as above consider the case of covers of $\Pp^1$ (for simplicity). From the Lang-Weil estimates for the number of rational points on a curve over $\Ff_q$, condition (i) from lemma \ref{prop:twisted cover} holds for at least one unramified $t_0\in k$ if $q+1 -2\widetilde {\rm g} \sqrt{q} > \widetilde rn!$ where $\widetilde {\rm g}$ is the genus of the covering space $\widetilde Z^{N}$ of the mere cover $\widetilde g^{N}$ from lemma \ref{prop:twisted cover} 
and $\widetilde r$ the branch point number of $\widetilde g^{N}$. But $\widetilde g^{N}\otimes_kk^\sep \simeq g\otimes_kk^\sep$ (where $g:Z\rightarrow \Pp^1$ is as before the Galois closure of $f:X\rightarrow \Pp^1$). Consequently $\widetilde r$ is merely the branch point number of $f$ and $\widetilde {\rm g}$ the genus of $g:Z\rightarrow \Pp^1$. Using the Riemann-Hurwitz formula, it is readily checked that $q \geq 4r^2 (n!)^2$ suffices to guarantee the preceding inequality. We obtain the following. 

\begin{corollary} \label{cor:finite-fields}
Let $f:X\rightarrow \Pp^1$ be a $\Ff_q$-mere cover of degree $n$, with $r$ branch points and with geometric monodromy group $S_n$. Assume that $q \geq 4r^2 (n!)^2$. Then for every positive integers $d_1,\ldots,d_s$ (possibly repeated) such that $\sum_{l=1}^s d_l = n$, there exists at least one $t_0 \in \Ff_q$ such that $\prod_{l=1}^s\Ff_{q^{d_l}}/\Ff_q$ is the collection of specializations of $f$ at $t_0$. 
\end{corollary}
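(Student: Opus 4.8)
The plan is to apply the twisting lemma (lemma \ref{prop:twisted cover}) to the given $\Ff_q$-mere cover $f:X\rightarrow\Pp^1$ and the $\Ff_q$-\'etale algebra $\prod_{l=1}^s\Ff_{q^{d_l}}/\Ff_q$, which has degree $\sum_l d_l = n$ as required. By the lemma, it suffices to exhibit a single unramified point $t_0\in\Pp^1(\Ff_q)$ lying under an $\Ff_q$-rational point of the twisted cover $\widetilde g^N:\widetilde Z^N\rightarrow\Pp^1$, where $N/\Ff_q$ is the compositum of the (Galois closures of the) extensions $\Ff_{q^{d_l}}/\Ff_q$; here $N=\Ff_{q^m}$ with $m=\mathrm{lcm}(d_1,\dots,d_s)$. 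So the whole problem reduces to producing one rational point on the curve $\widetilde Z^N$ that maps to an unramified, $\Ff_q$-rational value of $t$.

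For that I would invoke the Lang--Weil / Hasse--Weil estimate for the smooth projective model of $\widetilde Z^N$: if $\widetilde{\mathrm g}$ is its genus it has at least $q+1-2\widetilde{\mathrm g}\sqrt q$ rational points. From these one must discard the points lying over the $\widetilde r$ branch points of $\widetilde g^N$ and over $t=\infty$ if that is branched; each branch point has at most $n!$ preimages since $\deg\widetilde g^N=\deg g\le n!$. Hence as soon as $q+1-2\widetilde{\mathrm g}\sqrt q>\widetilde r\, n!$ there is an $\Ff_q$-point over an unramified rational $t_0$, and the twisting lemma finishes the proof. The next step is to replace $\widetilde{\mathrm g}$ and $\widetilde r$ by quantities intrinsic to $f$: since $\widetilde g^N\otimes_{\Ff_q}\Ff_q^{\sep}\simeq g\otimes_{\Ff_q}\Ff_q^{\sep}$ (the twisted cover is a form of $g\otimes\Ff_q^{\sep}$, by its construction in \S\ref{ssec:twisting-lemma-proof}), twisting changes neither the genus of the covering curve nor the branch point number; thus $\widetilde{\mathrm g}=\mathrm{g}(Z)$ and $\widetilde r=r$.

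Finally I would bound $\mathrm{g}(Z)$ in terms of $r$ and $n$ via Riemann--Hurwitz applied to $g:Z\rightarrow\Pp^1$: the degree is $|S_n|=n!$ and there are $r$ branch points, so $2\mathrm{g}(Z)-2\le n!(-2)+r\,n!$, giving $\mathrm{g}(Z)\le \tfrac{n!}{2}(r-2)+1< \tfrac{r\,n!}{2}$. A crude estimate then shows that $q\ge 4r^2(n!)^2$ makes $q+1-2\widetilde{\mathrm g}\sqrt q>\widetilde r\,n!$ hold (check: $2\widetilde{\mathrm g}\sqrt q< r\,n!\sqrt q\le r\,n!\cdot\tfrac{1}{2r\,n!}\,q=\tfrac q2$, while $r\,n!\le \tfrac12\sqrt q\le\tfrac q2$ for $q\ge4$, so $q+1-2\widetilde{\mathrm g}\sqrt q>q/2\ge r\,n!$). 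There is no serious obstacle here beyond bookkeeping; the only point requiring care — and the one I would state as a lemma-level input rather than reprove — is the geometric isomorphism $\widetilde g^N\otimes\Ff_q^{\sep}\simeq g\otimes\Ff_q^{\sep}$, i.e. that the twisted cover is genuinely an $\Ff_q$-form of $g\otimes\Ff_q^{\sep}$, which is exactly what was recalled from \cite{DEGha} in the proof of the twisting lemma. Everything else is the Weil bound plus Riemann--Hurwitz.
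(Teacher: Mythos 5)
Your proposal follows exactly the paper's argument: apply the twisting lemma, count $\Ff_q$-points on $\widetilde Z^N$ via the Weil bound, use the geometric isomorphism $\widetilde g^N\otimes\Ff_q^{\sep}\simeq g\otimes\Ff_q^{\sep}$ to replace $\widetilde{\mathrm g}$ and $\widetilde r$ by the genus of $Z$ and the branch point number $r$ of $f$, and finish with Riemann--Hurwitz. You also carry out the numerical check that $q\geq 4r^2(n!)^2$ suffices, which the paper leaves to the reader; this is correct.
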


One can even evaluate the number of $t_0 \in \Ff_q$ for which the conclusion holds: 
for example for $s=1$ and $d_1=n$, it is of the form $q/n + O(\sqrt{q})$. See \cite{DeLe2} for details
on this extra conclusion (which uses the converse (ii) $\Rightarrow$ (i) in the twisting lemma 
alluded to in \S \ref{ssec:twisting-statement}). 

\subsection{Complete valued fields} \label{ssec:local}
Assume $k$ is the quotient field of some complete discrete valuation ring $A$. 
Denote the valuation ideal by $\mathfrak{p}$, the residue field  
by $\kappa$, assumed to be perfect, and its characteristic by $p\geq 0$. A $k$-\'etale algebra $\prod_{l=1}^s E_l/k$ is said to be {\it unramified} if each field extension $E_l/k$ is unramified.

Let $B$ be a smooth projective and geometrically irreducible $k$-variety given
with an integral smooth projective model $\mathcal{B}$ over $A$.
Let  $f:X\rightarrow B$ be a degree $n$ $k$-mere cover with branch divisor $D$. Denote the Zariski closure of $D$ in ${\mathcal B}$ by ${\mathcal D}$,  the normalization of ${\mathcal B}$ in $k(X)$ by ${\mathcal F}: {\mathcal X}\rightarrow {\mathcal B}$ and its special fiber by ${\mathcal F}_\spe: {\mathcal X}_\spe \rightarrow {\mathcal B}_\spe$.

The constant $c(f,{\mathcal B})$ in the statement below only depends on $f$ and ${\mathcal B}$. It is the constant $c(g,{\mathcal B})$ from \cite{DEGha} for $g:Z\rightarrow B$ the Galois closure of $f:X\rightarrow B$. For $B=\Pp^1$, it can be taken to be $c(f,{\mathcal B}) =4r^2 (n!)^2$ where $r$ is the branch point number of $f$. See \cite[\S 2.5]{DEGha} for a more general description.


\begin{corollary} \label{cor:loc}
Let $k$, $\mathcal{B}$ and $f:X\rightarrow B$ be as above and $\prod_{l=1}^s E_l/k$ be an unramified $k$-\'etale algebra with \hbox{$\sum_{l=1}^s [E_l:k] = n$}. Assume that the geometric monodromy group of $f:X\rightarrow B$ is $S_n$ and that these two further conditions hold:

\vskip 1,5mm

\noindent
{\rm (good-red)} $p=0$ or $p>n$, ${\mathcal D}$ is smooth, ${\mathcal D}\cup {\mathcal B}_\spe$ is 
regular with normal crossings over $A$, and there is no vertical ramification at ${\mathfrak p}$ in the Galois closure $g:Z\rightarrow B$\footnote{
see \cite{DEGha} for a precise definition of non-vertical ramification. This condition can in fact be removed here if $n\geq 3$: according to a lemma of Beckmann \cite{Beckmann}, no vertical ramification may then occur (under the other assumptions $p>n$ and ${\mathcal D}$ \'etale) as the geometric monodromy group $S_n$ is of trivial center.}.
\vskip 1,5mm

\noindent
{\rm ($\kappa$-big-enough)} $\kappa$ is a PAC field or is a finite field of order 
$q\geq c(f,{\mathcal B})$.
\vskip 1,5mm

\noindent
Then there exist points $t_0\in B(k)\setminus D$ such that  $\prod_{l=1}^s E_l/k$ is the collection of specializations of $f$ at $t_0$. More precisely, the set of such points $t_0$ contains the preimage {\it via} the map ${\mathcal B}(A) \rightarrow {\mathcal B}_\spe(\kappa)$ of a non-empty subset $F\subset {\mathcal B}_\spe(\kappa) \setminus {\mathcal D}_\spe$.
 \end{corollary}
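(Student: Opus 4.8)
The plan is to reduce Corollary~\ref{cor:loc} to the twisting lemma \ref{prop:twisted cover} applied over the complete field $k$, and then to produce the required rational point $x_0 \in \widetilde Z^N(k)$ by a Hensel-type lifting argument from the residue field $\kappa$, using that $\kappa$ is either PAC or a large finite field. First I would observe that, by the twisting lemma, it suffices to exhibit an unramified $t_0 \in B(k)\setminus D$ lying under a $k$-rational point of the twisted cover $\widetilde g^N : \widetilde Z^N \rightarrow B$, where $N/k$ is the compositum inside $k^\sep$ of the Galois closures of the $E_l/k$. Since the $\prod_l E_l/k$ is an \emph{unramified} $k$-\'etale algebra, $N/k$ is an unramified extension, so $N$ is the fraction field of an unramified extension $A_N$ of $A$ with residue field $\kappa_N = \Gal$-conjugate data matching $\Gal(N/k)\cong\Gal(\kappa_N/\kappa)$. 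The twisted cover $\widetilde g^N$ is a $k$-model of $g \otimes_k k^\sep$, hence geometrically it is just the Galois closure of $f$ base-changed; in particular its branch locus, degree, genus and (via the model $\mathcal B$) its good-reduction behaviour are governed entirely by $f$ and $\mathcal B$, which is exactly why the constant $c(f,\mathcal B)$ can be taken as the one attached to $g$.

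Next I would set up the integral models: extend $\widetilde g^N$ to a morphism of $A$-schemes $\widetilde{\mathcal G}^N : \widetilde{\mathcal Z}^N \rightarrow \mathcal B$ by normalizing $\mathcal B$ in the function field $k(\widetilde Z^N)$, and examine its special fiber $\widetilde{\mathcal G}^N_\spe$ over $\kappa$. The hypothesis (good-red) --- $p = 0$ or $p > n$, $\mathcal D$ smooth, $\mathcal D \cup \mathcal B_\spe$ regular with normal crossings, no vertical ramification in the Galois closure --- is precisely the input needed (this is where I would invoke the machinery of \cite{DEGha}) to guarantee that $\widetilde{\mathcal Z}^N$ is smooth over $A$ away from $\mathcal D$ and that the special fiber $\widetilde{\mathcal Z}^N_\spe$ is itself a smooth (possibly reducible, but geometrically the twist is defined over $\kappa_N$) $\kappa$-variety which is a twisted form, over $\kappa$, of the special fiber of the Galois closure of $f$. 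Here the footnote's remark (Beckmann's lemma) handles the vertical-ramification clause for $n \geq 3$. The key point is that reduction mod $\mathfrak p$ commutes with the twisting construction: the special fiber $\widetilde{\mathcal Z}^N_\spe \rightarrow \mathcal B_\spe$ is exactly the twist of the Galois closure of $\mathcal F_\spe : \mathcal X_\spe \rightarrow \mathcal B_\spe$ by the residue extension $\kappa_N/\kappa$, whose Galois group maps isomorphically to $\Gal(N/k)$.

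Then I would find a $\kappa$-rational point on this special fiber. In the PAC case, $\widetilde{\mathcal Z}^N_\spe$ has a geometrically irreducible component defined over $\kappa$ (because the relevant $\pi_1$-representation restricted to the geometric fundamental group is transitive, so the twisted special fiber --- being a $\kappa$-model of a geometrically irreducible curve --- is geometrically irreducible over $\kappa$), hence it has a Zariski-dense set of $\kappa$-points, and we may pick one avoiding $\mathcal D_\spe$ and the non-smooth locus. In the finite-field case with $q \geq c(f,\mathcal B)$, the Lang--Weil / Weil bound exactly as in the proof of Corollary~\ref{cor:finite-fields} --- applied to the special fiber, whose genus is the genus of $g$ and whose branch number is that of $f$, these being the geometric invariants bounded in $c(f,\mathcal B) = 4r^2(n!)^2$ for $B = \Pp^1$ --- produces at least one $\kappa$-rational point outside $\mathcal D_\spe$. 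In either case, call the resulting point $\bar x_0$, with image $\bar t_0 \in \mathcal B_\spe(\kappa) \setminus \mathcal D_\spe$.

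Finally I would lift: since $\mathcal B$ is smooth over $A$, by Hensel's lemma the point $\bar t_0$ lifts to a point $t_0 \in \mathcal B(A)$, i.e. to $t_0 \in B(k)$, unramified because $\bar t_0 \notin \mathcal D_\spe$ forces $t_0 \notin D$. Because $\bar x_0$ is a \emph{smooth} point of $\widetilde{\mathcal Z}^N$ over $A$ (this is what (good-red) buys us) lying over $\bar t_0$, Hensel again lifts $\bar x_0$ to $x_0 \in \widetilde{\mathcal Z}^N(A) = \widetilde Z^N(k)$ with $\widetilde g^N(x_0) = t_0$; more precisely, the whole fiber of $\mathcal B(A) \rightarrow \mathcal B_\spe(\kappa)$ over $\bar t_0$ consists of such $t_0$, which is the asserted set $F$. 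Condition (i) of the twisting lemma is then verified for every $t_0$ in the preimage of $F := \{\bar t_0\}$ (or the dense set in the PAC case), and condition (ii) --- that $\prod_l E_l/k$ is the collection of specializations of $f$ at $t_0$ --- follows immediately. The main obstacle I anticipate is the second step: verifying that under (good-red) the integral twisted model $\widetilde{\mathcal Z}^N$ really is $A$-smooth over the locus of interest and that its special fiber is the $\kappa_N/\kappa$-twist of the special fiber of the Galois closure --- i.e. that forming the twisted cover and reducing modulo $\mathfrak p$ genuinely commute, including the matching of monodromy data with residue Galois data. This is exactly the content imported from \cite{DEGha}'s good-reduction analysis of $c(g,\mathcal B)$, and once it is in hand the rest is a routine combination of Lang--Weil (or PAC) with Hensel's lemma.
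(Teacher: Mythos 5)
Your proof follows exactly the paper's route: reduce via the twisting lemma to producing $k$-rational points on the twisted cover $\widetilde Z^N$, use the unramifiedness of $N/k$ together with (good-red) to ensure the integral model $\widetilde{\mathcal Z}^N$ (normalization of $\mathcal B$ in $k(\widetilde Z^N)$) has good reduction, then find $\kappa$-points on the special fiber via PAC or Lang--Weil and lift by Hensel's lemma. The step you correctly flag as the technical heart --- that twisting commutes with reduction mod $\mathfrak p$ under (good-red) --- is precisely what the paper imports from \cite{DEGha} (proposition 2.2 and lemma 2.4), so your account matches the paper's proof in both structure and substance.
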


\vskip 2mm

\begin{proof} Let $\widetilde g^{N}: \widetilde Z^{N} \rightarrow B$ be the $k$-mere cover from 
lemma \ref{prop:twisted cover}, obtained by twisting the Galois closure $g:Z\rightarrow B$ of $f:X\rightarrow B$ by the {\it compositum} $N/k$ of the Galois closures of $E_1/k, \ldots, E_s/k$. From lemma \ref{prop:twisted cover}, it 
suffices to show that $\widetilde Z^{N}$ has $k$-rational points.  This 
(and the more precise conclusion of corollary \ref{cor:loc}) is explained in proposition 2.2 
and lemma 2.4 from \cite{DEGha} which we summarize below.

Denote by ${\mathcal G}: {\mathcal Z}\rightarrow {\mathcal B}$ the normalization of ${\mathcal B}$ in $k(Z)$. Assumption {\rm (good-red)} holds for ${\mathcal G}$ as it holds for ${\mathcal F}$ ($f$ and $g$ have the same branch divisor) 
and the Galois extension $N/k$ is unramified ({\it compositum} of 
unramified extensions). These two conditions guarantee that the morphism $\widetilde {\mathcal G}^{N}: \widetilde {\mathcal Z}^{N} \rightarrow {\mathcal B}$ obtained by normalizing ${\mathcal B}$ in $k(\widetilde Z^{N})$ has good reduction (including no vertical ramification at ${\mathfrak p}$) \cite[proposition 2.2]{DEGha}. Assumption {\rm ($\kappa$-big-enough)} shows next that $\kappa$-rational points exist on the special fiber $\widetilde {\mathcal Z}^{N}_\spe$; if $\kappa$ is finite, this follows from the Lang-Weil estimates \cite[lemma 2.4]{DEGha}. Hensel's lemma is finally used to lift these $\kappa$-rational points to $k$-points on $\widetilde Z^{N}$. \end{proof}

\subsection{Local-global results} \label{ssec:field_loc-glob} 
Finding rational points on varieties over a global field $k$ is harder than it is over local fields. Nevertheless results from \S \ref{ssec:local} can be used to obtain local-global statements. We explain below how to globalize local information coming from corollary \ref{cor:loc}.
  
Let  $k$ be the quotient field of some De\-de\-kind domain $R$ and $S$ be a finite set of places of $k$ corresponding to some prime ideals in $R$. For every place $v$,  the completion of $k$ is denoted by $k_v$, the valuation ring by $R_v$, the valuation ideal by ${\frak p}_v$, the residue field  by $\kappa_v$ which we assume to be perfect, the order (possibly infinite) of $\kappa_v$ by $q_v$ and its characteristic by $p_v$.  

Let $B$ be a smooth projective and geometrically integral $k$-variety,  given with an integral model ${\mathcal B}$ over $R$ such that ${\mathcal B}_v= {\mathcal B}\otimes_{R}R_v$ is smooth for each $v\in S$. The {\it weak approximation property} below guarantees that $k_v$-rational points on $B$ ($v\in S$) that may be provided by corollary \ref{cor:loc} can be approximated by some $k$-rational point on $B$. 
\vskip 2,5mm

\noindent
{\rm (weak-approx\hskip2pt$/S$)} \hskip 2mm  {\it $B(k)$ is dense in $\prod_{v\in S} B(k_v)$.} 
\vskip 2mm

\noindent
The next statement then readily follows from corollary \ref{cor:loc}.


\begin{corollary} \label{cor:loc-glob}
Let $k$, $S$, $\mathcal{B}$ be as above, $f:X\rightarrow B$ be a degree $n$ $k$-mere cover with  branch divisor $D$, and, for each $v\in S$, let $\prod_{l=1}^{s_v} \hskip -1pt E_{v,l}/k_v$ be an unramified $k_v$-\'etale algebra \hbox{with $\sum_{l=1}^{s_v} [E_{v,l}:k_v] \hskip -1pt = \hskip -1pt n$}.
\vskip 1mm   

\noindent
Assume that

\noindent 
- the geometric monodromy group of $f:X\rightarrow B$ is $S_n$,

\noindent 
- the weak approximation condition  {\rm (weak-approx\hskip2pt$/S$)} holds, and

\noindent
- for each $v\in S$, assumptions {\rm (good-red)} and {\rm ($\kappa$-big-enough)} of co\-rol\-lary \ref{cor:loc}  hold for the $k_v$-mere cover $f\otimes_k k_v$ and the residue field $\kappa_v$. 
%

\noindent

\vskip 1mm   

\noindent 
Then there exist $v$-adic open subsets $U_v\subset B(k_v)\setminus D$ ($v\in S$) such that $B(k)\cap \prod_{v\in S} U_v\not= \emptyset$ and the following holds: for each $t_0 \in B(k)\cap \prod_{v\in S} U_v$ and each $v\in S$, the \'etale algebra $\prod_{l=1}^{s_v} E_{v,l}/k_v$ is the collection of specializations of $f\otimes_k k_v$ at $t_0$.

 \end{corollary}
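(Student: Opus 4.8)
The plan is to apply corollary \ref{cor:loc} separately at each place $v\in S$ and then glue the resulting local solutions by means of weak approximation.

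First I would fix $v\in S$ and check that the data $f\otimes_kk_v$, the model ${\mathcal B}_v={\mathcal B}\otimes_RR_v$ (smooth and projective over $R_v$ by hypothesis, and integral since ${\mathcal B}$ is), and the unramified $k_v$-\'etale algebra $\prod_{l=1}^{s_v}E_{v,l}/k_v$ satisfy all the assumptions of corollary \ref{cor:loc}: the geometric monodromy group of $f\otimes_kk_v$ is $S_n$, $\sum_l[E_{v,l}:k_v]=n$, and both {\rm (good-red)} and {\rm ($\kappa$-big-enough)} hold for $f\otimes_kk_v$ and $\kappa_v$. Corollary \ref{cor:loc} then produces a non-empty subset $F_v\subset {\mathcal B}_{v,\spe}(\kappa_v)\setminus {\mathcal D}_{v,\spe}$ such that the preimage $U_v$ of $F_v$ under the reduction map ${\mathcal B}_v(R_v)\to {\mathcal B}_{v,\spe}(\kappa_v)$ consists of points $t_0$ for which $\prod_{l}E_{v,l}/k_v$ is the collection of specializations of $f\otimes_kk_v$ at $t_0$.

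Next I would verify that $U_v$ is a non-empty $v$-adically open subset of $B(k_v)\setminus D$. Since ${\mathcal B}_v$ is smooth over $R_v$, the reduction map ${\mathcal B}_v(R_v)\to {\mathcal B}_{v,\spe}(\kappa_v)$ is surjective and each fiber is a residue disk, hence $v$-adically open (and closed); so $U_v$, a union of such fibers over the non-empty set $F_v$, is non-empty and open. Moreover $B(k_v)={\mathcal B}_v(R_v)$ by properness of ${\mathcal B}_v$ (valuative criterion), and a point of $D(k_v)$ extends to a section of the Zariski closure ${\mathcal D}_v$ over $R_v$, so it reduces into ${\mathcal D}_{v,\spe}$; since every point of $U_v$ reduces into the complement of ${\mathcal D}_{v,\spe}$, we get $U_v\subset B(k_v)\setminus D$.

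Finally, $\prod_{v\in S}U_v$ is a non-empty open subset of $\prod_{v\in S}B(k_v)$, so condition {\rm (weak-approx\hskip2pt$/S$)} provides a point in $B(k)\cap \prod_{v\in S}U_v$; indeed the argument applies to every point of that (automatically non-empty) intersection. For such a $t_0$ and any $v\in S$, viewing $t_0$ in $B(k_v)$ through $B(k)\subset B(k_v)$, its membership in $U_v$ together with corollary \ref{cor:loc} gives that $\prod_{l=1}^{s_v}E_{v,l}/k_v$ is the collection of specializations of $f\otimes_kk_v$ at $t_0$, and $t_0\notin D$ because $U_v\subset B(k_v)\setminus D$. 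This is exactly the asserted conclusion. The only genuinely delicate point is the middle step — that the preimage of $F_v$ is $v$-adically open and avoids $D$ — which is where smoothness of ${\mathcal B}_v$ and properness of $B$ enter; the rest is a direct combination of corollary \ref{cor:loc} with the weak approximation hypothesis.
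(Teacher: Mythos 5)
Your proposal is correct and follows exactly the route the paper intends: the paper says the statement "readily follows from corollary \ref{cor:loc}" combined with weak approximation, and your argument — applying corollary \ref{cor:loc} place by place to obtain the sets $F_v$, taking the preimages $U_v$ under reduction (which are non-empty $v$-adic opens by smoothness and properness of ${\mathcal B}_v$), and then invoking {\rm (weak-approx\hskip2pt$/S$)} — is precisely that. The verification that $U_v$ avoids $D$ and your careful justification of openness are welcome details, but there is no substantive divergence from the paper.
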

 
\noindent
{\it Addendum \ref{cor:loc-glob}.}
Each condition $q_v \geq c(f\otimes_kk_v, {\mathcal B}\otimes_kk_v$) from assumption {\rm ($\kappa_v$-big-enough)} can be guaranteed by some condition $q_v \geq C(f,{\mathcal B})$ where $C(f,{\mathcal B})$  only depends on $f$ and ${\mathcal B}$ (and not on $v$). This constant is the constant $C(g,{\mathcal B})$ from \cite{DEGha} with $g$ the Galois closure of $f$. For $B=\Pp^1$, it can be taken to be $C(f,{\mathcal B}) = 4r^2 (n!)^2$ where $r$ is the branch point number of $f$. See \cite[\S 3.1]{DEGha} for a more general description.
\vskip 2mm

\section{Applications} \label{sec:applications}

The three subsections of this section correspond to the three main applications presented in
the introduction.

\subsection{Hilbert's irreducibility theorem} \label{ssec:HIT}

We elaborate on the local-global results from \S \ref{ssec:field_loc-glob} 
when $B=\Pp^1$. In this situation, assumption {\rm (weak-approx\hskip2pt$/S$)} holds for every $S$ and the good reduction assumption {\rm (good-red)} requires that each place $v\in S$ be {\it good}, by which we mean that $p_v=0$ or $p_v>n$, {\it the branch point set ${\bf t}=\{t_1,\ldots,t_r\}$ of $f$ is \'etale} ({\it i.e.}, no two distinct branch points {\it coalesce} modulo the valuation ideal of $v$) and 
there is no vertical ramification in the Galois closure of $f$ \footnote{This last condition is automatic if the geometric monodromy group is $S_n$ with $n=\deg(f)\geq 3$.}.

\subsubsection{A standard trick} \label{ssec:standard_trick}
Over certain fields ({\it e.g.} number fields), there is a trick that makes it possible, at the cost of throwing in more places in $S$, to further guarantee in corollary \ref{cor:loc-glob} that Hilbert's irreducibility conclusion holds, {\it i.e.} that the collection of specializations of $f$ at $t_0$ consists of a single field extension $k(X)_{t_0}/k$ of degree $n$.

Namely the idea is to construct a finite set $S_0$ of finite places of $k$, disjoint from $S$, and to attach to each $v\in S_0$ a $k_v$-\'etale algebra $\prod_{l} E_{v,l}/k_v$ with all $E_{v,l}/k_v$ trivial but one consisting of an unramified Galois extension $E_v/k_v$ of group $H_v$ of order $\leq n$. If the assumptions of corollary \ref{cor:loc-glob} still hold for the set $T=S\cup S_0$, then it follows from its conclusion that $\Gal(k(Z)_{t_0}/k)$ contains some conjugate subgroup $H_v^{g_v}$ of $H_v$ for some $g_v\in S_n$ ($v\in S_0$). In some situations, this last condition implies that $\Gal(k(Z)_{t_0}/k)$ is all of $S_n$. This is the case for example if $S_0$ contains $3$ places with corresponding $H_v$ cyclic subgroups respectively generated by a $n$-cycle, a $n-1$-cycle and a $2$-cycle. Of course, for this idea to work, Galois extensions $E_v/k_v$ with groups $H_v$ should exist, for places $v$ satisfying the assumptions of corollary \ref{cor:loc-glob}. 

\subsubsection{The number field case} We develop the number field case for which this trick can be used. Another example would be to work over $k=\kappa(x)$ with $\kappa$ a PAC field with enough cyclic extensions.
We will also use the explicit aspect of \cite{DEGha} that makes it possible to be more precise on the constants. For simplicity take $k=\Qq$. The next statement is the outcome of the above considerations, together with corollary \ref{cor:loc-glob} (a detailed proof is easily obtained by adjusting the proof of \cite[corollary 4.1]{DEGha}).

\begin{corollary} \label{cor:effective} 
Let  $f:X\rightarrow \Pp^1$ be a degree $n$ $\Qq$-mere cover with geometric monodromy group $S_n$. There exist integers 
$m_0, \beta>0$ depending on $f$ with the following property.
%
Let ${\mathcal S}$ be a finite set of good primes $p>m_0$, each given with positive integers $d_{p,1}\ldots, d_{p,s_p}$ (possibly repeated) with $\sum_{l=1}^{s_p} d_{p,l}= n$. Then there exists $b \in \Zz$ such that 
\vskip 1,2mm

\noindent
{\rm (i)} $0\leq b \leq \beta \prod_{p\in {\mathcal S}} p$,  

\vskip 1,2mm

\noindent
{\rm (ii)} for each integer $t_0\equiv b$ {\rm mod} $(\beta \prod_{p\in {\mathcal S}} p)$, $t_0$ is not a branch point of $f$ and the collection of specializations of $f$ at $t_0$ consists of a single field extension $\Qq(X)_{t_0}/\Qq$ of degree $n$ which has residue degrees $d_{p,1}\ldots, d_{p,s_p}$ at $p$ for each $p\in {\mathcal S}$, and $S_n$ as Galois group of its Galois closure.


\end{corollary}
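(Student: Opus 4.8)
The plan is to derive the statement from the local-global corollary~\ref{cor:loc-glob} (which itself rests on the twisting lemma~\ref{prop:twisted cover}), applied over $\Qq$ with $B=\Pp^1$, along the lines of \cite[corollary 4.1]{DEGha}, with the irreducibility trick of \S\ref{ssec:standard_trick} added to upgrade the conclusion to a single field extension with full Galois group. First I would fix, once and for all, three distinct primes $q_1<q_2<q_3$ that are good for $f$ and satisfy $q_i\geq 4r^2(n!)^2=c(f,{\mathcal B})$; such primes exist since only finitely many primes are not good for $f$. Then one may take $\beta=q_1q_2q_3$ and $m_0$ larger than $q_3$, than $4r^2(n!)^2$, and than every prime that is not good for $f$. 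With these choices every prime $p>m_0$ is good and satisfies $p\geq c(f,{\mathcal B})$, so assumptions (good-red) and ($\kappa$-big-enough) of corollary~\ref{cor:loc} hold for $f\otimes_\Qq\Qq_p$; the same holds at each $q_i$; and, since any admissible $\mathcal S$ consists of primes $>m_0\geq q_3$, the auxiliary set $S_0=\{q_1,q_2,q_3\}$ is automatically disjoint from $\mathcal S$.

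Given such an $\mathcal S$ with its partitions, I would apply corollary~\ref{cor:loc-glob} to $f$, the set $T=\mathcal S\cup S_0$, and the following local \'etale algebras. At $p\in\mathcal S$, take the unramified $\Qq_p$-algebra $\prod_{l=1}^{s_p}E_{p,l}/\Qq_p$ with $E_{p,l}/\Qq_p$ the unramified extension of degree $d_{p,l}$. At $q_1$, take a single unramified field extension of degree $n$ (Galois representation an $n$-cycle); at $q_2$, the unramified extension of degree $n-1$ together with one trivial factor (an $(n-1)$-cycle); at $q_3$, the unramified quadratic extension together with $n-2$ trivial factors (a transposition). In each case the algebra is unramified of total degree $n$, and (weak-approx/$T$) is automatic for $\Pp^1$, so corollary~\ref{cor:loc-glob} applies and, in the more precise form coming from corollary~\ref{cor:loc} (for $\Pp^1_{\Zz_v}$ with good reduction the admissible $t_0$ contain the preimage under reduction mod $v$ of a non-empty subset of $\Pp^1(\Ff_v)$ avoiding ${\mathcal D}_\spe$, which we may shrink to a single point of $\Aa^1(\Ff_v)$), each $v$-adic open set $U_v$ may be taken to be a residue class $t_0\equiv a_v\pmod v$ with $\overline{a_v}$ off the branch locus.

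By the Chinese remainder theorem there is $b\in\Zz$ with $0\le b<\prod_{v\in T}v=\beta\prod_{p\in\mathcal S}p$ and $b\equiv a_v\pmod v$ for every $v\in T$, which gives (i). For any integer $t_0\equiv b\pmod{\beta\prod_{p\in\mathcal S}p}$, the point $t_0$ lies in each $U_v$, in particular is not a branch point of $f$, and corollary~\ref{cor:loc-glob} identifies the collection of specializations of $f\otimes_\Qq\Qq_v$ at $t_0$ with the prescribed algebra for every $v\in T$. Since specialization commutes with the base change $\Qq\hookrightarrow\Qq_v$, the $q_1$-datum forces the collection of specializations of $f$ at $t_0$ to reduce to a single field $\Qq(X)_{t_0}/\Qq$ of degree $n$ (a product of local fields is a field only if it has one factor); the $p$-data for $p\in\mathcal S$ identify the completions of $\Qq(X)_{t_0}$ above $p$ with the unramified extensions of degrees $d_{p,1},\dots,d_{p,s_p}$, i.e.\ prescribe the residue degrees at $p$; and the data at $q_1,q_2,q_3$ show that $\Gal(\Qq(Z)_{t_0}/\Qq)\le S_n$ contains an $n$-cycle, an $(n-1)$-cycle and a transposition. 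A transitive subgroup of $S_n$ containing an $(n-1)$-cycle is $2$-transitive (the stabilizer of the point fixed by the cycle is transitive on the remaining points), and a $2$-transitive, hence primitive, subgroup containing a transposition is all of $S_n$ by Jordan's theorem; so $\Gal(\Qq(Z)_{t_0}/\Qq)=S_n$, which is the Galois group of the Galois closure of $\Qq(X)_{t_0}/\Qq$ since the collection at $t_0$ is a single field. This is (ii).

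The one point that really requires care is the uniformity of $\beta$: the auxiliary places enforcing the $S_n$-conclusion must be pinned down before $\mathcal S$ is given, so that their contribution $q_1q_2q_3$ can be absorbed into a constant depending only on $f$, yet they must remain disjoint from an arbitrary $\mathcal S$ — this is exactly why they are chosen below the threshold $m_0$ rather than above it. Everything else is bookkeeping already done in \cite{DEGha}: that the relevant local \'etale algebras exist and are unramified, that (good-red) and ($\kappa$-big-enough) hold beyond $m_0$ with the explicit constant $4r^2(n!)^2$, and that specialization is compatible with base change.
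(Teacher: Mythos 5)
Your proof is correct and follows the paper's own strategy, which (as stated in the paper) is to invoke corollary \ref{cor:loc-glob} together with the standard trick of \S\ref{ssec:standard_trick} and adjust \cite[corollary 4.1]{DEGha}. You correctly identify the key point of uniformity in $\beta$ (pinning down the three auxiliary primes $q_1,q_2,q_3$ below $m_0$ so that they stay disjoint from any admissible $\mathcal S$), you supply the group-theoretic details ($(n-1)$-cycle gives $2$-transitivity, Jordan's theorem with the transposition gives $S_n$) that the paper only sketches in \S\ref{ssec:standard_trick}, and your choice of $\beta=q_1q_2q_3$ and $m_0$ is consistent with Addendum \ref{cor:effective}.
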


\noindent
{\it Addendum} \ref{cor:effective} (on the constants) 
Denote 
the number of branch points of $f$ 
by $r$ and the number of bad primes by ${\hbox{\rm br}}({\bf t})$. One can take $m_0$ such 
that the interval $[4r^2 (n!)^2,m_0]$ contains at least ${\hbox{\rm br}}({\bf t})+3$ distinct primes, and $\beta$ to be the product of $3$ good 
primes in $[4r^2 (n!)^2,m_0]$. 
\medskip

If the cover $f:X\rightarrow \Pp^1$ is given by a polynomial $P(T,Y)\in \Qq[T,Y]$, {addendum}  \ref{cor:effective} provides a bound for the least specialization $t\geq 0$ making $P(t,Y)$  irreducible in $\Qq[Y]$ that depends only on $\deg_Y(P)$, $r$ and ${\hbox{\rm br}}({\bf t})$. It is conjectured that a bound depending only on $\deg(P)$ exists in general for Hilbert's irreducibility theorem (see \cite{DeWa}).


\subsection{Trinomial realizations and variants}  

Bary-Soroker's motivation in \cite{Bary-Soroker_Dirichlet} was to obtain analogs of Dirichlet's theorem
for polynomial rings. He proved that if $k$ is a PAC field, then given $a(Y), b(Y) \in k[Y]$ relatively prime, for every integer $n$, suitably large (depending on $a(Y), b(Y)$) and for which $k$ has at least one degree $n$ separable extension, there are infinitely many $c(Y)\in k[Y]$ such that $a(Y)+b(Y)c(Y)\in k[Y]$ is irreducible and of degree $n$. The strategy is to construct $c_0(Y)\in k[Y]$ such that $a(Y)+b(Y)c_0(Y) T\in k[T,Y]$ is absolutely irreducible, of degree $n$ and Galois group $S_n$ over $\overline k(T)$, and then to specialize $t$ properly (as in \S \ref{ssec:field_PAC}).
We develop below other applications. 

\subsubsection{Classical regular realizations of $S_n$} \label{ssec:classical_realizations} An hypothesis in our results from \S \ref{sec:varying_field} is that the $k$-mere cover $f:X\rightarrow B$ is of degree $n$ and
of geometric monodromy group $S_n$. We recall below some classical covers 
$f:X\rightarrow \Pp^1$ with these properties. The cover $f$ is given by a polynomial
$P(T,Y)\in k[T,Y]$, the map $f$ corresponding to the $T$-projection $(t,y)\rightarrow t$ from the curve $P(t,y)=0$ to the line.
Fix the integer $n\geq 2$.
\vskip 2mm

\noindent
(a) ({\it Trinomials}):  
$k$ is a field and $P(T,Y)=  Y^n-T^r Y^m + T^s \in k[T,Y]$ where $n, m, r, s$ are positive integers such that $1\leq m< n$,  $(m,n)=1$, the characteristic $p\geq 0$ of $k$ does not divide $mn(m-n)$ and $s(n-m)-rn=1$. The branch points of the associated cover $f:X\rightarrow \Pp^1$ are $0$, $\infty$ and $t_0=m^m n^{-n} (n-m)^{n-m}$ with corresponding ramification indices $m(n-m)$ at $0$, $n$ at $\infty$ and $2$ at $t_0$. 
See \cite[\S 2.4]{Schinzel-book}. 

There are other classical trinomials realizing $S_n$; see \cite[\S 4.4]{Serre-topics}: 

\noindent
-  $P(T,Y)=Y^n-Y^{n-1}-T$ for $p$ not dividing $n(n-1)$, which has branch points $0,\infty, Q(1-(1/n))$ with $Q(Y)=Y^n-Y^{n-1}$, and ramification indices $n$ at $\infty$, $n-1$ at $0$ and $2$ at $Q(1-(1/n))$, 

\noindent
- $P(T,Y)= Y^n-Y-T$ for $p$ not dividing $n(n-1)$; this last example is a special case of (b) below. 

\vskip 2mm

\noindent
(b) ({\it Morse polynomials}): 
$k$ is of characteristic $p\geq 0$ not dividing $n$ and $P(T,Y) = M(Y)-T$ where $M(Y)\in k[Y]$ is a degree $n$ {\it Morse polynomial}, that is: the zeroes $\beta_1,\ldots,\beta_{n-1}$ of the derivative $M^\prime$ are simple and $M(\beta_i)\not=M(\beta_j)$ for $i\not= j$. The branch points of the  cover $f:X\rightarrow \Pp^1$ are $\infty$ and $M(\beta_1),\ldots,M(\beta_{n-1})$, with  ramification indices $n$ at $\infty$ and $2$ at $M(\beta_1),\ldots,M(\beta_{n-1})$. 
See \cite[\S 4.4]{Serre-topics}. 
\vskip 2mm

\noindent
(c) ({\it An example of Uchida}): Let $k$ be any field and $U_0,\ldots, U_3$ be $4$ algebraically independent indeterminates. It is proved in \cite[corollary 2]{uchida} that for every $n\geq 4$, the polynomial $F(Y)= Y^n+U_3Y^3 + U_2Y^2 + U_1Y +U_0$ has Galois group $S_n$ over the field $k(U_0,\ldots, U_3)$. The following lemma makes it possible to derive a polynomial
\vskip 1,5mm 

\centerline{$P(T,Y) = Y^n+u_3(T) Y^3 + u_2(T) Y^2 + u_1(T) Y +u_0(T) \in k[T,Y]$}
\vskip 1,5mm

\noindent
 of Galois group $S_n$ over $\overline k(T)$.

\begin{lemma} \label{lemma:specialization}
Let $\underline U = (U_1,\ldots, U_\ell)$ be a set of algebraically independent indeterminates and $F(\underline U,Y)\in k(\underline U)[Y]$ be a degree $n$ polynomial with Galois group $S_n$ over $\overline k(\underline U)$. Then there exist infinitely many $\ell$-tuples $\underline u_T=(u_1(T),\ldots, u_\ell(T)) \in k[T]^\ell$ such that the polynomial $F(\underline u_T ,Y)$ has Galois group $S_n$ over $\overline k(T)$.
\end{lemma}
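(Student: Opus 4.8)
The plan is to reduce the statement to a Hilbert irreducibility argument applied to the ``Galois group drop locus'' in affine space, and then to specialize the parameters $\underline U$ one at a time to linear (or low-degree) polynomials in $T$. First I would set up the standard picture: the polynomial $F(\underline U, Y)$ defines a mere cover (or at least a generically \'etale morphism) $X \to \Aa^\ell_T$, well, more precisely a degree $n$ extension $k(\underline U)(Y)/k(\underline U)$ whose Galois closure has group $S_n$ over $\overline k(\underline U)$. The locus of $\ell$-tuples $(\underline u) \in \overline k{}^\ell$ (or in a suitable parameter space) where the specialized polynomial $F(\underline u, Y)$ fails to have Galois group $S_n$ over $\overline k$ is a thin (Hilbert-subset complement) subset of $\Aa^\ell$, by Hilbert's irreducibility theorem applied over $\overline k(\underline U)$ — more carefully, one uses that $\overline k(T)$ is Hilbertian and that a polynomial in one variable $T$ over $\overline k$ has full Galois group $S_n$ as soon as it avoids the relevant thin set.

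The key technical point is that we are not allowed to specialize to \emph{constants}: we need $\underline u_T \in k[T]^\ell$, i.e., $T$ must genuinely appear, and the resulting one-variable polynomial must still have group $S_n$ over $\overline k(T)$. So the natural strategy is an induction on $\ell$: specialize $U_\ell$ to a well-chosen polynomial $u_\ell(T) \in k[T]$ (for instance a generic linear polynomial $aT + b$, or if $k$ is too small, a higher-degree one) so that $F(U_1, \ldots, U_{\ell-1}, u_\ell(T), Y)$ still has Galois group $S_n$ over $\overline{k(U_1,\ldots,U_{\ell-1})}(T) = \overline{k(U_1,\ldots,U_{\ell-1},T)}$; this is exactly an instance of Hilbert irreducibility for the Hilbertian field $\overline k(U_1,\ldots,U_{\ell-1})(T)$, noting that $U_\ell \mapsto u_\ell(T)$ with $u_\ell$ nonconstant keeps $T$ transcendental over $\overline k(U_1,\ldots,U_{\ell-1})$ so the new field is again a rational function field in $\ell-1$ transcendentals $U_1,\ldots,U_{\ell-1}$ over $\overline k(T)$ — or rather, I would reorganize so that at each stage $T$ plays the role of the last remaining transcendental. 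Concretely: first substitute $U_1 = T$, reducing to $\ell - 1$ indeterminates over $\overline k(T)$, and then successively substitute $U_2, \ldots, U_\ell$ by polynomials in $T$, at each step invoking that $\overline k(T)$ (and the intermediate fields $\overline k(T)(U_{i+1},\ldots,U_\ell)$) are Hilbertian so that ``most'' polynomial values of the next indeterminate preserve the group $S_n$. Since each step excludes only a thin set of choices, and $k[T]$ has infinitely many elements of each degree avoiding any given thin subset of $\Aa^1_{\overline k}$ (because $\overline k(T)$ Hilbertian, or more elementarily because a thin set misses all but finitely many points of a degree-$d$ rational curve only in trivial cases — one picks $u_i(T)$ of sufficiently large degree if $k$ is finite), we obtain infinitely many valid $\underline u_T$, and combining the finitely many thin exclusions still leaves infinitely many tuples.

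The main obstacle I expect is the smallness of $k$: if $k = \Ff_q$ is a small finite field, substituting $U_i$ by a \emph{linear} polynomial in $T$ may not suffice to dodge the thin exceptional set, since there are only $q^2$ linear polynomials and the thin set could contain all of them. The fix is to allow $u_i(T)$ of growing degree: Hilbertianity of $\overline k(T)$ guarantees that for $d$ large the degree-$d$ polynomials avoiding the thin set form a Zariski-dense (in particular infinite) family, so one simply takes the $u_i$ of high enough degree. A secondary subtlety is bookkeeping the Galois group over $\overline k(T)$ rather than $k(T)$ — but this is precisely why Hilbert's theorem is applied over the algebraically closed constant field $\overline k$, where ``geometric'' and ``arithmetic'' Galois groups agree and the relevant Hilbert subsets are defined by the $S_n$-irreducibility of resolvent polynomials; this poses no genuine difficulty. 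I would also remark that the construction is already essentially carried out in the literature for substitutions of this kind (e.g., the specialization arguments in Serre's \cite{Serre-topics} and the references there), so the proof can be kept brief by citing Hilbertianity of $\overline k(T)$ and inducting.
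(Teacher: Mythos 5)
Your overall strategy---transfer the problem to the Hilbertian field $\overline k(T)$ and then specialize the $U_i$ to polynomials in $T$---is the same one the paper uses. The one-variable-at-a-time induction is a cosmetic difference; the paper simply adjoins $T$ as a fresh transcendental and specializes all of $\underline U$ at once via Hilbertianity of $\overline k(T)$. The real issue, which both you and the paper identify, is that Hilbertianity of $\overline k(T)$ only hands you good specializations in $\overline k(T)$, whereas the lemma demands them in $k[T]$.

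It is exactly at this point that your argument has a genuine gap. You assert that ``Hilbertianity of $\overline k(T)$ guarantees that for $d$ large the degree-$d$ polynomials avoiding the thin set form a Zariski-dense (in particular infinite) family, so one simply takes the $u_i$ of high enough degree.'' This claim is doing all the work and is not proved; it is not a formal consequence of Hilbertianity of $\overline k(T)$ that the exceptional locus pulls back to a \emph{proper} subvariety of the coefficient space $\Aa^{\ell(d+1)}$, nor (for $k=\Ff_q$) that the Lang--Weil count beats the complement uniformly in $d$. The paper shows this step is substantive: for $k$ infinite one can cite \cite[\S 13.2]{FrJa}, but to cover the general case (including small finite residue fields, which is precisely what you flag) the authors invoke a specific density result, \cite[theorem 3.3]{De-Hilbert-density}, producing good values of the form $t_0 + a b^m$ with $b\in k[T]$ nonconstant---and even that theorem has to be adjusted, because one needs $a$ to lie in $k(T)$ rather than $\overline k(T)$. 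The adjustment is a positive-characteristic point (showing $k(T)$ meets infinitely many cosets of $\overline k(T)$ modulo $\overline k(T)^p$); this is exactly the sort of wild/inseparable subtlety that your Zariski-density heuristic for ``high enough degree'' glosses over. So while your intuition is pointing in the right direction, the decisive step---locating Hilbert-good elements inside $k[T]$ over an arbitrary base field $k$---needs an actual argument, not an appeal to Hilbertianity of the larger field $\overline k(T)$, and in characteristic $p$ it requires more than just raising the degree.
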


\begin{proof}
The polynomial $F(\underline U,Y)$ has Galois group $S_n$ over the field 
$\overline k(T)(\underline U)$. The desired conclusion follows from the Hilbert specialization property of the hilbertian field $\overline k(T)$ but one needs a version that provides good specialisations in $k(T)$ (and not just in $\overline k(T)$). This is classical if $k$ is infinite 
({\it e.g.} \cite[\S 13.2]{FrJa}). For the general case, we resort to theorem 3.3 from 
\cite{De-Hilbert-density} which shows that given a Hilbert subset ${\mathcal H}\subset \overline k(T)$,
for all but finitely many $t_0\in \overline k(T)$, there exists $a\in \overline k(T)$ such that if $b\in k[T]$ is any non-constant polynomial, then ${\mathcal H}$ contains infinitely many elements of the form $t_0+ab^m$ ($m\geq 0$). This gives what we want if $a$ can be chosen in $k(T)$. Although this is not stated, the proof 
shows that such a choice is possible; the main point is to adjust \cite[lemma 3.2]{De-Hilbert-density} 
to show that there are infinitely many cosets of $k(T)$ modulo $\overline k(T)^p$, where $p$ is the characteristic of $k$. \end{proof}

\subsubsection{Special realizations of extensions of PAC fields}
We say a field extension $E/k$ can be {\it realized by a polynomial} $Q(Y)\in k[Y]$ if $Q(Y)$ is the irreducible polynomial over $k$ of some primitive element of $E/k$.






\begin{corollary} \label{cor:trinomial_realization} 
Let $k$ be a PAC field of characteristic $p\geq 0$. If $n\geq 2$ and $p$ does not divide $n(n-1)$, 
every degree $n$ extension $E/k$ can be realized by a trinomial $Y^n - Y + b$ for some $b\in k$. 
Furthermore, if $p\not=2$, the separable closure $k^\sep$ is generated over $k$ by all elements $y\in k^\sep$ such that $y^n-y\in k$ for some integer $n\geq 2$.   
\end{corollary}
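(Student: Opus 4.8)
The plan is to apply Corollary~\ref{cor:PAC} to the classical trinomial cover $P(T,Y)=Y^n-Y-T$ listed in \S\ref{ssec:classical_realizations}(a) (the case $s=1$, $d_1=n$). For the first assertion, let $E/k$ be a degree $n$ separable extension (separability is automatic since $p\nmid n(n-1)$ forces $p\nmid n$, so $Y^n-Y-T$ is separable in $Y$). The cover $f:X\to\Pp^1$ attached to $P(T,Y)$ has degree $n$ and geometric monodromy group $S_n$ over $\overline k$ by the cited results of Serre, so Corollary~\ref{cor:PAC} (with the single-field \'etale algebra $E/k$) gives a Zariski-dense set of $t_0\in k$, in particular some $b\in k$ not a branch point, such that the unique specialization $k(X)_{t_0}/k$ equals $E/k$. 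By the polynomial form of the twisting lemma recorded after its statement, for all but finitely many such $t_0$ this means $P(b,Y)=Y^n-Y-b$ is irreducible over $k$ with a root generating $E/k$; hence $E/k$ is realized by the trinomial $Y^n-Y+b'$ after the harmless sign change $b'=-b$ (or simply note $Y^n-Y-b$ and $Y^n-Y+b$ generate the same family as $b$ ranges over $k$). Since a Zariski-dense subset of $\Aa^1(k)$ is infinite when $k$ is infinite, and $k$ PAC is infinite, there are in fact infinitely many valid $b$.

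For the second assertion, fix $p\neq 2$ and let $L\subset k^\sep$ be the subfield generated over $k$ by all $y\in k^\sep$ with $y^m-y\in k$ for some $m\ge 2$. The claim is $L=k^\sep$. Since $k^\sep/k$ is separable and $L/k$ is a subextension, it suffices to show that $L$ has no proper finite separable extension inside $k^\sep$, equivalently (by the primitive element theorem) that every finite separable extension of $k$ contained in $k^\sep$ is already contained in $L$; running over all such, $L=k^\sep$. So let $E/k$ be finite separable of some degree $n$. If $p\nmid n(n-1)$ we are done by the first part: $E=k(y)$ with $y^n-y=b\in k$, so $y\in L$. The obstacle is the finitely many $n$ with $p\mid n(n-1)$, i.e. $n\equiv 0$ or $n\equiv 1\pmod p$; for these the trinomial $Y^n-Y-b$ need not even be separable, so the first part does not apply directly.

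To handle those $n$, the idea is to embed $E$ into a larger separable extension $E'/k$ whose degree \emph{is} prime to $p(p-1)$-type obstructions, realize $E'$ by a trinomial, and then recover $E$. Concretely: choose an auxiliary prime $\ell$ (or a product of small primes) so that $[E':k]=n\ell$ is coprime to the bad residues; since $p\ge 3$ one can always find such a multiplier, because $\{n\cdot(1+jp): j\ge 1\}$ meets the set of integers prime to... --- more robustly, pick an integer $N$ with $E\subset E'$, $[E':k]=N$, $p\nmid N(N-1)$. One clean way to produce $E'$: adjoin to $k$ a root of a generic trinomial of the right degree; since $k$ is PAC and hence has separable extensions of every degree coprime to $p$ (e.g. by the first part applied with $n$ replaced by a suitable degree, or by a direct Artin--Schreier tower argument), one can arrange $E'/k$ separable with $E\subseteq E'$ and $[E':k]$ chosen in the good range. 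Then $E'=k(y')$ with $(y')^{N}-y'\in k$, so $y'\in L$ and hence $E'\subseteq L$; since $E\subseteq E'$, every generator of $E$ lies in $L$. As $E$ was an arbitrary finite separable extension of $k$ inside $k^\sep$, we conclude $L=k^\sep$.

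The main obstacle, then, is exactly the bad degrees $p\mid n(n-1)$ in the second statement: showing that every finite separable $E/k$ sits inside some separable $E'/k$ of degree in the good range, using only that $k$ is PAC of characteristic $p\neq 2$. I expect this to follow from the fact that PAC fields have separable extensions of every degree not divisible by $p$ (realize $S_m$ regularly and specialize, via the first part of the corollary with an appropriate $m$), combined with a counting argument choosing $m=[E':k]$ a multiple of $n=[E:k]$ with $m\not\equiv 0,1\pmod p$ --- possible since the residues $\{0,1\}$ miss at least $p-2\ge 1$ classes and $n\cdot j$ runs through an arithmetic progression hitting every class that $\gcd(n,p)$-divides, which is all of $\Zz/p$ as $p\nmid n$ would hold once we also arrange... one must be slightly careful when $p\mid n$, in which case $nj\equiv 0$ always; there we instead pass through $E''=E\cdot k(\zeta)$ or use an Artin--Schreier-free degree first to make the degree prime to $p$, then multiply. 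These are routine manipulations with field degrees once the existence of enough separable extensions of PAC fields is in hand.
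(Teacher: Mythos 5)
Your proof of the first assertion is correct and is exactly the paper's argument (Corollary \ref{cor:PAC} applied to $Y^n-Y-T$ with the single-field \'etale algebra $E/k$).

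Your proof of the second assertion has a genuine gap. You propose to embed a given degree-$m$ separable extension $E/k$ into a larger separable extension $E'/k$ whose degree $N$ satisfies $p\nmid N(N-1)$, and then apply the first part to $E'$. But such an $E'$ need not exist. The absolute Galois group of a PAC field can be an arbitrary projective profinite group (Lubotzky--van den Dries), so there are PAC fields $k$ with $\Gabs_k\cong\Zz_p$; there every finite separable extension has degree a power of $p$, and any $E'$ containing a nontrivial $E$ has $p\mid [E':k]$ --- the "good range" is unreachable. The maneuvers you sketch at the end (multiplying by $\ell$, adjoining $\zeta$, an Artin--Schreier-free step) all presuppose extensions of controllable degree that a general PAC field simply may not have, so the argument does not close.

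The paper sidesteps this entirely: instead of enlarging $E$, one shrinks nothing and pads the \'etale algebra. Choose any $n\ge m$ with $p\nmid n(n-1)$ (possible since $p\neq 2$: among any two consecutive integers mod $p$ at most two residues are bad, and $p\geq 3$ leaves room), then apply Corollary \ref{cor:PAC} to the degree-$n$ trinomial cover with the \'etale algebra $E/k \times (k/k)^{n-m}$. For a dense set of $t_0\in k$, the polynomial $Y^n-Y-t_0$ factors as an irreducible degree-$m$ factor generating $E/k$ times $n-m$ linear factors; so $E=k(y)$ with $y^n-y=t_0\in k$. This requires no embedding of $E$ into a larger field and works over every PAC field of characteristic $\neq 2$. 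The move you should internalize is that Corollary \ref{cor:PAC} handles arbitrary \'etale algebras, not just single field extensions, and padding with trivial factors is how one decouples the degree $n$ of the auxiliary cover from the degree $m$ of the target extension.
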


\begin{proof}[Proof of corollary \ref{cor:trinomial_realization}] The first part follows from corollary \ref{cor:PAC} applied with $f:X\rightarrow \Pp^1$ given by the trinomial $P(T,Y)= Y^n-Y-T$  from \S \ref{ssec:classical_realizations} (a) and the \'etale algebra $\prod_{l=1}^s E_l/k$ taken to be the  field extension $E/k$. To prove the second part, consider a separable extension $E/k$ of degree $m\geq 2$. Pick an integer $n\geq m$ such that $p$ does not divide $n(n-1)$ (this is possible as $p\not=2$) and do as above but with the \'etale algebra $\prod_{l=1}^s E_l/k$ taken to be the product of the field 
extension $E/k$ with $n-m$ copies of the trivial extension $k/k$. Conclude that $E/k$ has a primitive
element whose irreducible polynomial divides $Y^n-Y+b$ for some $b\in k$. As $E/k$ is an arbitrary finite extension, this provides the announced description of $k^\sep$.
\end{proof}

Proceeding as above but using the Morse polynomial realization (b) from \S \ref{ssec:classical_realizations} (instead of trinomial realizations), we obtain this statement. 

\begin{corollary} \label{cor:morse}
Let $n\geq 2$ be an integer, $k$ be a PAC field of  characteristic $p\geq 0$ not dividing $n$ and $M(Y)\in k[Y]$ be a  degree $n$ Morse polynomial. Then every degree $n$ extension $E/k$ can 
be realized by a polynomial $M(Y)+b$ for some $b\in k$.
\end{corollary}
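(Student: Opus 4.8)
The plan is to mimic the proof of Corollary~\ref{cor:trinomial_realization} verbatim, replacing the family of trinomials $Y^n-Y-T$ from \S\ref{ssec:classical_realizations}~(a) by the Morse family $P(T,Y)=M(Y)-T$ from \S\ref{ssec:classical_realizations}~(b). First I would record the input: since $p$ does not divide $n$ and $M(Y)$ is a degree $n$ Morse polynomial, \S\ref{ssec:classical_realizations}~(b) tells us that the associated cover $f:X\rightarrow\Pp^1$, given by the $T$-projection from the curve $M(Y)-T=0$, is a degree $n$ $k$-mere cover with geometric monodromy group $S_n$ (its branch points are $\infty$ and $M(\beta_1),\ldots,M(\beta_{n-1})$, with the stated ramification indices). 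This is exactly the hypothesis required to apply Corollary~\ref{cor:PAC}.

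Next I would apply Corollary~\ref{cor:PAC} with $k$ the given PAC field, with this cover $f$, and with the $k$-\'etale algebra $\prod_{l=1}^sE_l/k$ taken to be the single degree $n$ field extension $E/k$ (so $s=1$ and $\sum[E_l:k]=n=\deg f$ as demanded). The corollary yields a Zariski-dense set of $t_0\in\Pp^1(k)$, in particular infinitely many $b\in k$ not among the finitely many branch points, for which the collection of specializations of $f$ at $t_0=b$ equals $E/k$. Translating through the polynomial form of the specialization dictionary (\S\ref{ssec:specialization}, and the polynomial version of the twisting lemma), the fiber $f^{-1}(b)$ is cut out by $M(Y)-b=0$, so $M(Y)-b$ is irreducible over $k$ and one of its roots generates $E/k$; equivalently $M(Y)+b'$ realizes $E/k$ with $b'=-b$. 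Since $-b$ ranges over an infinite (indeed Zariski-dense) subset of $k$ as $b$ does, this gives the claimed conclusion for some $b\in k$.

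There is essentially no obstacle: the entire content has been packaged into Corollary~\ref{cor:PAC} and into the geometric monodromy computation of \S\ref{ssec:classical_realizations}~(b), so the argument is a one-line substitution in the proof of Corollary~\ref{cor:trinomial_realization}. The only point worth a moment's care is to make sure the degree bookkeeping matches, namely that $\deg f=n$ equals the degree $n$ of the target extension $E/k$ so that the hypothesis $\sum_{l}[E_l:k]=n$ of Corollary~\ref{cor:PAC} is literally satisfied with $s=1$; here the statement of Corollary~\ref{cor:morse} has been set up precisely so that this holds, which is why (unlike the second half of Corollary~\ref{cor:trinomial_realization}) no padding by copies of the trivial extension $k/k$ is needed.

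\begin{proof}[Proof of corollary \ref{cor:morse}]
By \S\ref{ssec:classical_realizations}~(b), since $p$ does not divide $n$ and $M(Y)$ is a degree $n$ Morse polynomial, the $k$-mere cover $f:X\rightarrow\Pp^1$ given by the polynomial $P(T,Y)=M(Y)-T$ (the map $f$ being the $T$-projection from the curve $M(Y)-T=0$) is of degree $n$ and of geometric monodromy group $S_n$. Apply corollary \ref{cor:PAC} with this cover $f$ and with the $k$-\'etale algebra $\prod_{l=1}^sE_l/k$ taken to be the single field extension $E/k$ (so $s=1$, and $\sum_{l=1}^s[E_l:k]=n=\deg(f)$ as required). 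It provides a Zariski-dense subset of points $t_0\in\Pp^1(k)$, hence infinitely many $t_0\in k$ avoiding the branch points, for which the collection of specializations of $f$ at $t_0$ is $E/k$. In polynomial form this means $M(Y)-t_0$ is irreducible in $k[Y]$ and one of its roots generates $E/k$; that is, $E/k$ is realized by the polynomial $M(Y)+b$ with $b=-t_0\in k$.
\end{proof}
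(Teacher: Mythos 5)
Your proof is correct and follows exactly the same route as the paper, which simply says ``Proceeding as above but using the Morse polynomial realization (b) \ldots\ (instead of trinomial realizations)'' and leaves the substitution to the reader. You have spelled out that substitution accurately, including the observation that no padding by trivial extensions is needed since $\deg f = n = [E:k]$.
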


Finally Uchida's example and lemma \ref{lemma:specialization} from \S \ref{ssec:classical_realizations} (c) yield this.

\begin{corollary} \label{cor:uchida}
Let $n\geq 4$ be an integer and $k$ be a PAC field of any characteristic. Then every separable degree $n$ extension $E/k$ can be realized by a polynomial $Y^n+aY^3+bY^2+cY+d$ for some $a,b,c,d\in k$.
\end{corollary}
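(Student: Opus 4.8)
The plan is to deduce Corollary \ref{cor:uchida} from Corollary \ref{cor:PAC} in exactly the same way the trinomial case (Corollary \ref{cor:trinomial_realization}) and the Morse case (Corollary \ref{cor:morse}) were deduced, the only new ingredient being the choice of the base cover. First I would invoke Uchida's result \cite[corollary 2]{uchida}: for $n \geq 4$ the polynomial $F(Y) = Y^n + U_3 Y^3 + U_2 Y^2 + U_1 Y + U_0$ has Galois group $S_n$ over $k(U_0,\ldots,U_3)$. Then I would apply Lemma \ref{lemma:specialization} with $\underline U = (U_0,U_1,U_2,U_3)$ (so $\ell = 4$) to obtain a $4$-tuple $\underline u_T = (u_0(T),u_1(T),u_2(T),u_3(T)) \in k[T]^4$ such that
\[
P(T,Y) = Y^n + u_3(T) Y^3 + u_2(T) Y^2 + u_1(T) Y + u_0(T) \in k[T,Y]
\]
has Galois group $S_n$ over $\overline k(T)$, as a polynomial in $Y$.

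Next I would let $f : X \rightarrow \Pp^1$ be the $k$-mere cover associated with $P(T,Y)$ via the $T$-projection from the affine curve $P(t,y) = 0$; it is a degree $n$ $k$-cover whose geometric monodromy group is $S_n$ precisely because the Galois group of $P(T,Y)$ over $\overline k(T)$ is $S_n$. (One should note in passing that $P$ has $Y$-degree exactly $n$ and is separable in $Y$ for the generic $t$, so this is a genuine mere cover of degree $n$; this follows from the Galois-group statement.) Now I apply Corollary \ref{cor:PAC} with this $f$ and with the $k$-\'etale algebra $\prod_{l=1}^s E_l/k$ taken to be the single given degree $n$ separable field extension $E/k$ (here $\sum_l [E_l:k] = [E:k] = n$, as required). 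The corollary then yields $t_0 \in \Pp^1(k) \setminus D$ — in fact a Zariski-dense set of such $t_0$ — at which the collection of specializations of $f$ is $E/k$ itself; that is, $P(t_0,Y)$ is irreducible in $k[Y]$ with a root generating $E/k$. Setting $a = u_3(t_0)$, $b = u_2(t_0)$, $c = u_1(t_0)$, $d = u_0(t_0)$ gives the desired realization of $E/k$ by $Y^n + aY^3 + bY^2 + cY + d$.

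Since all the heavy lifting has already been carried out (Uchida's theorem, Lemma \ref{lemma:specialization}, and Corollary \ref{cor:PAC}), there is no genuine obstacle; the one point that deserves a word of care is checking that $P(T,Y)$ as produced by Lemma \ref{lemma:specialization} still has $Y$-degree $n$ and defines an honest degree $n$ cover with geometric monodromy $S_n$ — but this is immediate from the transitivity and order of the Galois group $S_n$ acting on the $n$ roots, so the argument is complete.
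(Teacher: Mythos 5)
Your proof is correct and follows exactly the route the paper intends: the paper itself gives no separate argument for Corollary~\ref{cor:uchida}, stating only that ``Uchida's example and lemma~\ref{lemma:specialization} from \S~\ref{ssec:classical_realizations}~(c) yield this,'' and your write-up is the expected expansion of that remark, mirroring the proofs of Corollaries~\ref{cor:trinomial_realization} and~\ref{cor:morse}. The only point worth flagging (for precision, not correctness) is that Lemma~\ref{lemma:specialization} requires $S_n$ as Galois group over $\overline k(\underline U)$, so one should note explicitly that Uchida's result is applied with $\overline k$ in place of $k$ (legitimate since Uchida's theorem holds over any field), and that $t_0$ should be taken in $\Aa^1(k)\setminus D$ rather than $\Pp^1(k)\setminus D$ so that the evaluations $u_i(t_0)$ land in $k$ — both of which are implicit and easily arranged from the Zariski density.
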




\subsubsection{Variants} \label{ssec:trinomials-variants}
\hskip 1mm
\vskip 0,5mm

\noindent
(a) {\it Finite fields}. Proceeding as above but using corollary \ref{cor:finite-fields} instead of 
corollary \ref{cor:PAC} leads to the following conclusions for finite fields:
\vskip 1mm

\noindent
{\it - if $n\geq 2$ and $q\geq (2nn!)^2$ is a prime power with $(q, n(n-1))=1$, the ex\-ten\-sion $\Ff_{q^n}/\Ff_q$ can be realized by a trinomial $Y^n - Y+ b \in \Ff_q[Y]$,} 
\vskip 1mm

\noindent
{\it - if $M(Y)\in \Ff_q[Y]$ is a degree $n$ Morse polynomial such that $(n,q)=1$ and $q \geq (2n n!)^2$, the extension $\Ff_{q^n}/\Ff_q$ can be realized by the polynomial $M(Y)+b$ for some $b\in \Ff_q$.}
\vskip 3mm 

\noindent
(b) {\it $p$-adic fields}. It follows from (a) that 
\vskip 1mm

\noindent
{\it - if $p\geq (2nn!)^2$ is a prime, the degree $n$ unramified extension of $\Qq_p$ can be realized by a trinomial $Y^n - Y+ b$ for some $b\in \Zz_p$, or by a polynomial $M(Y)+b$ with $b\in \Zz_p$ and $M(Y)\in \Zz_p[Y]$ a degree $n$ monic polynomial with reduction modulo $p$ a Morse polynomial in $\Ff_p[Y]$}. 
\vskip 1mm

\noindent
This can also be proved by using \S \ref{ssec:local}
instead of \S \ref{ssec:finite_fields} (with possibly another bound on $p$).

\vskip 3mm

\noindent
(c) {\it Other trinomials}. The trinomials $Y^n - Y^{n-1}- T$ and $Y^n-T^r Y^m + T^s$ from \S \ref{ssec:classical_realizations} 
can be used instead of $Y^n - Y- T$ to provide similar conclusions. The assumption on $p$ remains that $p\not| \hskip 0pt n(n-1)$ for the former, and for the latter, it is that  $p \not|  mn(n-m)$ (with the other conditions on $n$ and $m$ from \S \ref{ssec:classical_realizations}); and the bound on $q$ can be replaced by the better one $q=p^f\geq (6n!)^2$. 

\vskip 3mm

\noindent
(d) {\it Missing characteristics}. Given an integer $n\geq 2$ and a prime $p$, corollary \ref{cor:PAC}, combined with lemma \ref{lemma:specialization}, shows in fact that
\vskip 1mm

\noindent
(*) {\it every degree $n$ separable extension $E/k$ of a PAC field $k$ of characteristic $p$ can be realized by some trinomial $Y^n + aY^m + b$ for some integer $1\leq m <n$ and some $a,b\in k$}, 
\vskip 1mm

\noindent
provided that the following holds:
\vskip 1mm

\noindent
(**) {\it there exists $1\leq m <n$ such that the trinomial $Y^n + UY^m + V$ has Galois group $S_n$ over $\overline \Ff_p(U,V)$ (where $U, V$ are two indeterminates).}
\vskip 1mm

\noindent
There are many results about condition (**) in the literature, notably in the papers \cite{uchida}, \cite{cohen2} and \cite{cohen3}. Here are conclusions that can be derived about the cases not covered 
by corollary \ref{cor:trinomial_realization}:
\vskip 0,5mm

\noindent
- if $p\not=2$, ($p|n$ or $p|n-1$) and $n$ is odd, (**) holds with $Y^n+UY^2 + V$ or with $Y^n-UY+V$
(\cite[corollary 3]{cohen3} and \cite[theorem2]{uchida}),
\vskip 0,5mm

\noindent
- if $p=2$ and $n$ is odd, (**) holds with $Y^n+UY^2 + V$ if $n\geq 5$
\cite[corollary 3]{cohen3} and with $Y^n-UY + V$ if $n=3$ \cite[theorem2]{uchida},
\vskip 0,5mm

\noindent
- if $p=3$ and $n=4$, (**) holds with $Y^n-UY + V$ \cite[theorem2]{uchida},
\vskip 0,6mm

\noindent
- if ($p=5$ and $n=6$) or ($p=2$ and $n=6$), (**) does not hold: $Y^6-UY + V$ has Galois group ${\rm PGL}_2(\Ff_5)$ over $\Ff_5(U,V)$ and $Y^6-UY + V$ has Galois group $A_5$ over $\Ff_4(U,V)$ \cite{uchida}.
\vskip 1mm

\noindent
Note that conjoining these results with corollary \ref{cor:trinomial_realization}, we obtain that  (**) and (*) always hold if $n$ is odd.
\vskip 3mm

\noindent
(e) {\it Number fields}. Over a number field $k$, extensions with trinomial realizations are more sparse. For example, Angeli proved that, for every $n\geq 3$, there are (up to some standard equivalence for trinomials) only finitely many degree $n$ trinomials with coefficients in $k$, irreducible and with Galois group a primitive subgroup $G\subset S_n$ distinct from $S_n$ and $A_n$ \cite{Angeli_these}. See also \cite{Angeli} where the same is proved with ``$G\subset S_n$ primitive'' replaced by ``$G$ solvable'' in the case $n$ is a prime. 


\subsection{Hurwitz spaces} \label{ssec:hurwitz_spaces}
Given an integer $r\geq 3$ and a finite group ${G}$ (resp. a subgroup $G\subset S_n$), 
there is a coarse moduli space called {\it Hurwitz space} for G-covers of $\Pp^1$ of group ${G}$ (resp. for mere covers of $\Pp^1$ of degree $n$ and geometric monodromy group $G\subset S_n$) with $r$ branch points. 
We view it here as a (reducible) variety defined over $\Qq$;
it can be more generally defined as a scheme over some extension ring of $\Zz[1/|G|]$. We do not distinguish between the G-cover and mere cover situations and use the same notation $\hbox{\sf{H}}_{r}({G})$ for the Hurwitz space. 

A central moduli property is that for any field $k$ of characteristic $0$, there is a one-one correspondence between the set of $\overline{k}$-rational points on $\hbox{\sf{H}}_{r}({G})$ and the set of iso\-mor\-phisms classes of (G- or mere) covers defined over $\overline{k}$ with the given invariants. Furthermore for every closed point $[f]\in \hbox{\sf{H}}_{r}({G})$, the field $k([f])$ is the field of moduli of the corresponding (G- or mere) cover $f$. We refer to \cite{DeDo1} for more on fields of moduli; in standard situations ({\it e.g.} $Z(G)=\{1\}$ for G-covers, ${\rm Cen}_{S_n}(G)=\{1\}$ for  mere covers) and in most situations below, the field of moduli is a field of definition of $f$ and is the smallest one.

Denote by $\hbox{\sf{U}}_r$ the configuration space for finite subsets of $\Pp^1$ of cardinality $r$. The map $\Psi_{r}: \hbox{\sf{H}}_{r}({G}) \rightarrow
\hbox{\sf{U}}_r$ that sends each isomorphism class of cover $[f]$
 in $\hbox{\sf{H}}_{r}({G})$ to its branch point set ${\bf
t}\in \hbox{\sf{U}}_r$ is an \'etale cover defined over $\Qq$. The geometrically irreducible components of $\hbox{\sf{H}}_{r}({G})$ correspond to the connected components of $\hbox{\sf{H}}_{r}({G})\otimes_\Qq\Cc$, which in turn correspond to the orbits of the so-called {\it Hurwitz monodromy action}, of the fundamental group of $\hbox{\sf{U}}_r$ (the {\it Hurwitz group $\mathcal{H}_r$}) on a fiber $\Psi_{r}^{-1}({\bf t})$ (${\bf t}\in \hbox{\sf{U}}_r(\overline k)$). For more on Hurwitz spaces, see \cite{Vo96}
or \cite{De_zakopane}.

In this situation we have this result. In (b) (ii) where $k$ is a number field and $v$ is a place of $k$, we use the notation $k_v^{\ur,f}$ ($f\in \Nn, f>0$) for the unramified extension of $k_v$ of degree $f$.

\begin{corollary} \label{cor:hurwitz_spaces}
Let $\hbox{\sf{H}}$ be a component of $\hbox{\sf{H}}_{r}({G})$ defined over a field 
$k$ and such that the restriction $(\Psi_{r})_{\hbox{$\scriptstyle \sf{H}$}}: \hbox{\sf{H}} \rightarrow \hbox{\sf{U}}_r$ induces a $k$-mere cover of geometric monodromy group $S_N$ with $N=\deg((\Psi_{r})_{\hbox{$\scriptstyle \sf{H}$}})$. 
\vskip 1,5mm

\noindent
{\rm (a)} If $k$ is PAC of characteristic $0$ and $\prod_{l=1}^s E_l/k$ a $k$-\'etale algebra with $\sum_{l=1}^s [E_l:k] = N$, there exists a Zariski-dense subset ${\mathcal U}\subset \hbox{\sf{U}}_r(k)$ such that for each ${\bf t}_0 \in {\mathcal U}$, the $k$-\'etale algebra $\prod_{l=1}^s E_l/k$ is the collection of the smallest fields of definition of the $\overline k$-covers $[f:X\rightarrow \Pp^1]$ in $\hbox{\sf{H}}$  (mere or G-) with branch divisor ${\bf t}_0$.

\vskip 2mm

\noindent
{\rm (b)} If $k$ is a number field, there exist two constants $p(r,{G})$ and $q(r,{G})$ depending only on $r$ and ${G}$ with the following property. Let $S$ be a finite subset of finite places of $k$ with residue field of order $\geq q(r,{G})$ and residue characteristic $\geq p(r,{G})$, and for each 
$v\in S$, let $d_{v,1}\ldots, d_{v,s_v}$ be positive integers with $\sum_{l=1}^{s_v} d_{v,l}= N$. 
There exists a Zariski-dense subset ${\mathcal U} \subset \hbox{\sf{U}}_r(k)$, of the form ${\mathcal U}= \hbox{\sf{U}}_r(k) \cap \prod_{v\in S} U_v$ for some $v$-adic open subsets $U_v\subset \hbox{\sf{U}}_r(k_v)$, such that for each ${\bf t}_0 \in {\mathcal U}$, the $\overline k$-covers $f:X\rightarrow \Pp^1$ in $\hbox{\sf{H}}$ 
with branch divisor ${\bf t}_0$ satisfy the following:
\vskip 0,5mm



\noindent
\hskip 2mm {\rm (i)} their field of moduli $k([f])$ is a degree $N$ extension of $k$, and 
\vskip 0,5mm

\noindent
\hskip 2mm {\rm (ii)} for each $v\in S$, the $k_v$-\'etale algebra $\prod_{l=1}^{s_v} k_v^{\ur,d_{v,l}}/k_v$  is the collection of the smallest fields of definition of the $\overline{k_v}$-covers $f\otimes_{\overline k} \overline{k_v}$ (for any given embedding ${\overline k} \hookrightarrow \overline{k_v}$).
\end{corollary}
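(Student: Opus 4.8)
The plan is to apply the general machinery of \S\ref{sec:varying_field} to the specific $k$-mere cover $(\Psi_r)_{\hbox{$\scriptstyle \sf{H}$}} \colon \hbox{\sf{H}} \to \hbox{\sf{U}}_r$, whose degree is $N$ and whose geometric monodromy group is $S_N$ by hypothesis, and then translate the resulting statements about collections of specializations back into statements about fields of definition of the parametrized covers via the moduli property of $\hbox{\sf{H}}_r(G)$. First I would recall the two dictionaries: a $\overline k$-point of $\hbox{\sf{H}}$ above ${\bf t}_0 \in \hbox{\sf{U}}_r(\overline k)$ is an isomorphism class of a $\overline k$-cover with branch divisor ${\bf t}_0$ lying in the component $\hbox{\sf{H}}$, and for a closed point $[f]$ the field $k([f])$ is the field of moduli of $f$ (which, in the standard situations referred to in \S\ref{ssec:hurwitz_spaces}, coincides with the smallest field of definition). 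Consequently, for ${\bf t}_0 \in \hbox{\sf{U}}_r(k)$ the collection of specializations of $(\Psi_r)_{\hbox{$\scriptstyle \sf{H}$}}$ at ${\bf t}_0$ is precisely the $k$-\'etale algebra $\prod_l E_l / k$ whose factors $E_l/k$ are the residue fields of the points of $\Psi_r^{-1}({\bf t}_0)$ in $\hbox{\sf{H}}$, i.e. the fields of moduli (= smallest definition fields) of the covers in $\hbox{\sf{H}}$ with branch divisor ${\bf t}_0$.

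For part (a), the base $\hbox{\sf{U}}_r$ is a smooth geometrically irreducible $k$-variety, $k$ is PAC of characteristic $0$, and the cover has geometric monodromy group $S_N$; so corollary \ref{cor:PAC} applies verbatim with $f$ replaced by $(\Psi_r)_{\hbox{$\scriptstyle \sf{H}$}}$, $B$ by (a smooth projective model containing) $\hbox{\sf{U}}_r$, $n$ by $N$, and the given $\prod_{l=1}^s E_l/k$ with $\sum [E_l:k]=N$. It yields a Zariski-dense set ${\mathcal U}\subset \hbox{\sf{U}}_r(k)$ of unramified points ${\bf t}_0$ at which the collection of specializations is $\prod_l E_l/k$; by the translation above this is exactly the claimed statement about smallest definition fields. (One should note that an unramified point of $(\Psi_r)_{\hbox{$\scriptstyle \sf{H}$}}$ is one lying outside the branch locus, and that Zariski-density on a dense open subset of a projective model gives Zariski-density in $\hbox{\sf{U}}_r(k)$.)

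For part (b), $k$ is a number field and one feeds $(\Psi_r)_{\hbox{$\scriptstyle \sf{H}$}}$ into corollary \ref{cor:loc-glob}, taking at each $v\in S$ the unramified $k_v$-\'etale algebra $\prod_{l=1}^{s_v} k_v^{\ur,d_{v,l}}/k_v$ (which has the right total degree $N$), and, to secure conclusion (i) that the field of moduli is a single degree $N$ extension, using the standard trick of \S\ref{ssec:standard_trick}: adjoin to $S$ three auxiliary good places $S_0$ carrying unramified cyclic $k_v$-algebras whose nontrivial factors realize an $N$-cycle, an $(N-1)$-cycle and a transposition, so that the specialized Galois group is forced to be all of $S_N$ and the étale algebra is a field. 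One then invokes corollary \ref{cor:loc-glob} (with weak approximation on $\hbox{\sf{U}}_r$, which holds since $\hbox{\sf{U}}_r$ is rational) to get $v$-adic open sets $U_v$ and a Zariski-dense ${\mathcal U}=\hbox{\sf{U}}_r(k)\cap\prod_{v\in S}U_v$ with the desired local behaviour; the constants $p(r,G)$, $q(r,G)$ come from Addendum \ref{cor:loc-glob} applied to $(\Psi_r)_{\hbox{$\scriptstyle \sf{H}$}}$ and a model of $\hbox{\sf{U}}_r$, depending only on the degree $N$ and branch number of this cover, hence only on $r$ and $G$. The main obstacle I anticipate is checking that the good-reduction hypothesis (good-red) genuinely holds for $(\Psi_r)_{\hbox{$\scriptstyle \sf{H}$}}$ at all places with large enough residue characteristic — i.e. controlling the reduction of the Hurwitz cover and its branch locus and verifying that bad reduction is confined to finitely many explicitly bounded primes — together with making sure the auxiliary places in $S_0$ can be chosen disjoint from $S$ and compatible with all the numerical constraints; both are handled exactly as in the proof of \cite[corollary 4.1]{DEGha}, to which the argument reduces after the moduli-theoretic translation.
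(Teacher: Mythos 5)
Your proposal follows essentially the same route as the paper: apply the twisting-lemma machinery of \S\ref{sec:varying_field} to the $k$-mere cover $(\Psi_r)_{\hbox{$\scriptstyle \sf{H}$}}$, use the moduli interpretation of fibers of $\Psi_r$, and invoke weak approximation on $\hbox{\sf{U}}_r$ together with the trick of \S\ref{ssec:standard_trick} for the degree-$N$ conclusion in (b). One small point you leave implicit and the paper makes explicit: in (b)(ii) the specializations of the Hurwitz cover give the \emph{fields of moduli} of the local covers $f\otimes_{\overline k}\overline{k_v}$, and to identify these with the \emph{smallest fields of definition} one needs to know that the field of moduli is a field of definition over $k_v$; the paper observes that this follows precisely from the good-reduction assumption (good-red), citing \cite{DeHa}, whereas your ``standard situations'' aside refers to the centerlessness criterion, which is not what is being used here. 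This is a minor justification gap rather than an error, since you correctly flag (good-red) as the condition needing verification and it does indeed serve double duty.
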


\begin{proof}
(a) and (b) respectively follow from the twisting lemma \ref{prop:twisted cover}, applied to the 
$k$-mere cover $(\Psi_{r})_{\hbox{$\scriptstyle \sf{H}$}}: \hbox{\sf{H}} \rightarrow \hbox{\sf{U}}_r$, and from
the interpretation recalled above of the specializations of this cover at some point ${\bf t}_0\in \hbox{\sf{U}}_r$ as the fields of moduli of the points $[f] \in \hbox{\sf{H}}$ above ${\bf t}_0$. Over a PAC field, the field of moduli is always a field of definition (and is the smallest one) \cite{DeDo1}. The PAC situation in (a) offers no further difficulty. 

Statement (b) is a local-global statement as in \S \ref{ssec:field_loc-glob}. Conditions $p_v\geq p(r,{G})$ and $q_v\geq q(r,{G})$ ($v\in S$) are here to guarantee
assumptions {\rm (good-red)} and {\rm ($\kappa$-big-enough)} of corollary \ref{cor:loc-glob}. Condition
{\rm (good-red)} also implies that the field of moduli of each local cover $f\otimes_{\overline k} \overline{k_v}$ is a field of definition \cite{DeHa}. The variety $\hbox{\sf{U}}_r$ being birational to $\Pp^r$ has the weak approximation property {\rm (weak-approx\hskip2pt$/S$)} for any set $S$, so sets of the form ${\mathcal U}= \hbox{\sf{U}}_r(k) \cap \prod_{v\in S} U_v$ with $U_v\subset \hbox{\sf{U}}_r(k_v)$ non-empty $v$-adic open subsets,  are non-empty, and even Zariski-dense in  $\hbox{\sf{U}}_r(k)$.
The standard trick recalled in \S \ref{ssec:standard_trick} should be used to obtain condition (ii)
that the extension $k([f])/k$ be exactly of degree $N$.
\end{proof}

There is in corollary \ref{cor:hurwitz_spaces} the assumption that $(\Psi_{r})_{\hbox{$\scriptstyle \sf{H}$}}: \hbox{\sf{H}} \rightarrow \hbox{\sf{U}}_r$ be a $k$-mere cover of geometric monodromy group $S_N$. 
This assumption can be checked in practical situations. Indeed the geometric monodromy group is the image group of the Hurwitz monodromy action (restricted to the component $\hbox{\sf{H}}$), which 
can be made totally explicit. 


\bibliography{FCAmore1}
\bibliographystyle{alpha}

\end{document}